\title{AK-ASD Twistor}
\begin{document}

\title{Almost-K\"ahler four manifolds with nonnegative biorthogonal curvature}


\author{Inyoung Kim}
\maketitle
  \begin{abstract}
We classify compact almost-K\"ahler four manifolds with nonnegative biorthogonal curvature. 
\end{abstract}

\maketitle

\section{\large\textbf{Introduction}}\label{S:Intro}
Let $(M, \omega)$ be a symplectic four manifold. 
Then there always exists an almost-complex structure $J$ such that 
$\omega(Jv, Jw)=\omega(v, w)$ and $\omega(v, Jv)>0$ for all nonzero $v\in TX$ [19]. 
We define the riemannian metric $g$ by 
\[g(v, w)=\omega(v, Jw).\]
Then $(M, g, \omega, J)$ is called an almost-K\"ahler structure.

Let $\{e_{1}, e_{2}, e_{3}, e_{4}\}$ be an orthonormal basis of $T_{p}M$ for $p\in M$
and let $P$ be the plane generated by $\{e_{1}, e_{2}\}$ and $P^{\perp}$ be the orthogonal plane to $P$
such that $P^{\perp}$ is generated by $\{e_{3}, e_{4}\}$.

Then the biorthogonal curvature is defined by 
\[K^{\perp}(P)=\frac{K(P)+K(P^{\perp})}{2},\]
where $K(P)$ is the sectional curvature on the plane $P$.
In this article, we consider compact almost-K\"ahler four manifolds with $K^{\perp}\geq 0$.

On an oriented riemannian four manifold, the curvature operator according to the decomposition of $\Lambda^{2}=\Lambda^{+}\oplus \Lambda^{-}$
is given by

\[ 
\mathfrak{R}=
\LARGE
\begin{pmatrix}

 \begin{array}{c|c}
\scriptscriptstyle{W_{+}\hspace{5pt}\scriptstyle{+}\hspace{5pt}\frac{s}{12}}I& \scriptscriptstyle{ric_{0}}\\
 \hline
 
 \hspace{10pt}
 
 \scriptscriptstyle{ric_{0}^{*}}& \scriptscriptstyle{W_{-}\hspace{5pt}\scriptstyle{+}\hspace{5pt}\frac{s}{12}}I\\
 \end{array}
 \end{pmatrix}
 \]
Here $ric_{0}$ is the trace-free Ricci part of the curvature operator. 
When $W_{-}=0$, $(M, g)$ is called to be self-dual.

\vspace{20pt}
\hrule
\vspace{10pt}
$\mathbf{Keywords}$: four manifold, harmonic 2-form, biorthogonal curvature, almost-K\"ahler

$\mathbf{MSC}$: 53C21, 53C24, 53C55

Republic of Korea

Email address: kiysd5@gmail.com

$\Lambda^{+}$ is the $(+1)$-eigenspace of Hodge star operator $*$
and $\Lambda^{-}$ is the $(-1)$-eigenspace of $*$.
We define the Hodge-star operator $*$ by 
$\left<*\omega_{1}, \omega_{2}\right>d\mu=\omega_{1}\wedge\omega_{2}$.

\vspace{20pt}

K\"ahler surfaces with positive sectional curvature, more generally positive bisectional curvature are biholomorphic to $\mathbb{CP}_{2}$ [6], [7]. 
The result in [10] implies the classification of compact K\"ahler surfaces with positive orthogonal holomorphic bisectional curvature. 
Almost-K\"ahler four manifolds with positive biorthogonal curvature is diffeomorphic to $\mathbb{CP}_{2}$ [11]. 
Compact K\"ahler surfaces with nonnegative holomorphic bisectional curvature were classified [9].
In this article, we show a similar result can be obtained for compact almost-K\"ahler four manifolds with nonnegative biorthogonal curvature.
Moreover, considering Gray's result [8], we show that a compact almost-K\"ahler four manifold $(M, g, \omega)$ with $J$-invariant Ricci tensor 
is locally symmetric if $(M, g)$ has nonnegative biorthogonal curvature and constant scalar curvature.

\vspace{50pt}

\section{\large\textbf{Self-dual almost-K\"ahler four manifolds with scalar curvature conditions}}\label{S:Intro}
We consider compact self-dual almost-K\"ahler four manifolds with scalar curvature conditions. 
 
\newtheorem{Proposition}{Proposition}
\begin{Proposition}
Let $(M, g, \omega)$ be a compact almost-K\"ahler four-manifold with zero scalar curvature. 
Then either $(M, g, \omega)$ is scalar-flat K\"ahler or $M$ is diffeomorphic to a rational or ruled surface. 
In particular, if $c_{1}\cdot\omega=0$, then $(M, g, \omega)$ is scalar-flat K\"ahler. 

\end{Proposition}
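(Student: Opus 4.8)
The plan is to pit the scalar-flatness hypothesis against the Chern--Weil expression for $c_{1}(M,J)\cdot[\omega]$. First I would record that, since $(M,g,\omega,J)$ is almost-K\"ahler, the fundamental form $\omega$ is closed and self-dual with $|\omega|^{2}\equiv 2$, hence a harmonic self-dual $2$-form; in particular $b_{+}(M)\geq 1$, and $(M,\omega)$ is a compact symplectic four-manifold whose symplectic canonical class is $K_{\omega}=-c_{1}(M,J)$. The one nontrivial input is the well-known inequality valid for every almost-K\"ahler metric compatible with $\omega$,
\[
\int_{M}s\,d\mu \ \leq\ 4\pi\,c_{1}(M,J)\cdot[\omega],
\]
with equality precisely when $\nabla\omega\equiv 0$, that is, precisely when the structure is K\"ahler. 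This is a standard Chern--Weil computation for the canonical Hermitian connection; the deficit $4\pi\,c_{1}(M,J)\cdot[\omega]-\int_{M}s\,d\mu$ equals a positive constant times $\int_{M}|\nabla\omega|^{2}\,d\mu$, and it is at this point that the almost-K\"ahler condition $d\omega=0$ enters, through the vanishing of the Lee form.

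Now impose $s\equiv 0$. The displayed inequality becomes $c_{1}(M,J)\cdot[\omega]\geq 0$, with equality if and only if the structure is K\"ahler. This already yields the final assertion of the proposition: if $c_{1}\cdot\omega=0$ we are in the equality case, so $\nabla\omega\equiv 0$, the structure is K\"ahler, and being scalar-flat it is scalar-flat K\"ahler. The same observation disposes of the branch $c_{1}(M,J)\cdot[\omega]=0$ of the main dichotomy, giving its first alternative.

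It remains to treat $c_{1}(M,J)\cdot[\omega]>0$, equivalently $K_{\omega}\cdot[\omega]<0$. Here I would invoke the classification of symplectic four-manifolds of Kodaira dimension $-\infty$: by the theorem of A.-K.~Liu, refined by Li--Liu and Ohta--Ono, a compact symplectic four-manifold with $K_{\omega}\cdot[\omega]<0$ is rational or ruled, i.e.\ diffeomorphic to $\mathbb{CP}_{2}$, to an $S^{2}$-bundle over a compact Riemann surface, or to a blow-up of one of these. (The condition $K_{\omega}\cdot[\omega]<0$ already forces $b_{+}(M)=1$ by Taubes's positivity theorem, so the hypotheses of Liu's theorem are satisfied; if $(M,\omega)$ is not minimal one first passes to a symplectic minimal model, noting that a symplectic blow-down cannot increase $K\cdot[\omega]$ and hence leaves it negative.) Thus $M$ is diffeomorphic to a rational or ruled surface, which is the second alternative, and the dichotomy is established.

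The main obstacle, once this scheme is fixed, is simply to have the displayed inequality in hand with the correct direction and the sharp equality clause; everything downstream of it is assembled from results already in the literature. An equally viable route to $c_{1}(M,J)\cdot[\omega]\geq 0$ would run through the Weitzenb\"ock formula for the harmonic self-dual form $\omega$ combined with the almost-K\"ahler identity for $\langle W_{+}\omega,\omega\rangle$, or through Seiberg--Witten theory and LeBrun's scalar-curvature estimates; the Chern--Weil argument is simply the most economical.
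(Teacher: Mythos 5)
Your proposal is correct and is essentially the paper's own argument: the inequality $\int_M s\,d\mu\le 4\pi c_1\cdot[\omega]$ with deficit $\tfrac12\int_M|\nabla\omega|^2\,d\mu$ is exactly the paper's combination of $4\pi c_1\cdot[\omega]=\int_M\tfrac{s+s^*}{2}\,d\mu$ with the Weitzenb\"ock identity $s^*=s+|\nabla\omega|^2$, and the case $c_1\cdot[\omega]>0$ is handled by the same appeal to Taubes and Liu. The only cosmetic difference is that you package the Weitzenb\"ock computation as a Chern--Weil statement for the canonical Hermitian connection.
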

\begin{proof}
For a four-dimensional almost-K\"ahler metric, we have 
\[s^{*}:=2R(\omega, \omega)=2\left(W_{+}(\omega, \omega)+\frac{s}{12}|\omega|^{2}\right).\]
From a Weitzenb\"ock formula for self-dual 2-forms, $\Delta\alpha=\nabla^{*}\nabla\alpha-2W_{+}(\alpha, \cdot)+\frac{s}{3}\alpha$, 
we have
\[0=\frac{1}{2}\Delta|\omega|^{2}+|\nabla\omega|^{2}-2W_{+}(\omega, \omega)+\frac{s}{3}|\omega|^{2}\]
\[0=|\nabla\omega|^{2}-2W_{+}(\omega, \omega)+\frac{s}{3}|\omega|^{2},\]
since $\omega$ is a self-dual harmonic 2-form of length $\sqrt{2}$. 
From this, we get 
\[s^{*}=s+|\nabla\omega|^{2}.\]
Thus $s^{*}\geq 0$ if $s\equiv0$. 
Moreover, $s^{*}\equiv 0$ if and only if $(M, g, \omega)$ is K\"ahler. 
Note that if $s^{*}\not\equiv 0$, then $\int_{M}\left(\frac{s+s^{*}}{2}\right)d\mu>0$.
From the following formula, 
\[4\pi c_{1}\cdot[\omega]=\int_{M}\frac{s+s^{*}}{2}d\mu,\]
we get $c_{1}\cdot[\omega]>0$. 
They by [18], [24], $(M, \omega)$ is a rational or ruled surface. 

If $c_{1}\cdot\omega=0$, then 
\[\int_{M}\left(\frac{s+s^{*}}{2}\right)d\mu=\int_{M}\frac{s^{*}}{2}d\mu=0.\]
Since $s^{*}=s+|\nabla\omega|^{2}\geq 0$, we get $s^{*}=s=0$. 
Therefore, $(M, g, \omega)$ is scalar-flat K\"ahler.

\end{proof}

By definition, a riemannian manifold $(M, g)$ is conformally flat if for any point, there is a neighborhood $U$ 
and a smooth function such that $(U, e^{2f}g)$ is flat. 
When the dimension of a riemannian manifold is greater than or equal to 4, 
then $(M, g)$ is conformally flat if and only if the Weyl tensor vanishes [2].

\newtheorem{Theorem}{Theorem}
\begin{Theorem}
(Tanno, [23])
Let $(M, g, \omega)$ be a conformally flat K\"ahler surface. 
Then $(M, g)$ is either locally flat or locally a product space of 2-dimensional riemannian manifolds of constant curvature
$K$ and $-K$. 

\end{Theorem}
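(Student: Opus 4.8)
The plan is to draw two conclusions from conformal flatness and then play them against the K\"ahler condition. First, $W=0$ gives in particular $W_{+}=0$; combining the identity $s^{*}=2\bigl(W_{+}(\omega,\omega)+\tfrac{s}{12}|\omega|^{2}\bigr)$ used in the Proposition above with the relation $s^{*}=s+|\nabla\omega|^{2}$ (so $s^{*}=s$ in the K\"ahler case) and $|\omega|^{2}=2$, one gets $s\equiv 0$. Hence the curvature operator $\mathfrak R$ has neither Weyl part nor scalar part and is governed entirely by $ric_{0}$; concretely the curvature tensor is $\tfrac12$ times the Kulkarni--Nomizu product of the trace-free Ricci tensor with $g$. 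Since the Ricci tensor of a K\"ahler manifold is $J$-invariant and is now trace-free, at each point it has eigenvalues $\lambda,\lambda,-\lambda,-\lambda$ for a continuous function $\lambda$, with $J$-invariant $2$-plane eigendistributions $D_{\lambda},D_{-\lambda}$ that are smooth on the open set $U=\{\lambda\neq 0\}$.

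Second, conformal flatness kills the Cotton tensor, so the Schouten tensor --- which because $s=0$ is just $\tfrac12\,ric$ --- is a Codazzi tensor; equivalently $\nabla\,ric$ is totally symmetric. First I would insert $X,Y\in D_{\lambda}$ and $Z\in D_{-\lambda}$ into the Codazzi identity $(\nabla_{X}ric)(Y,Z)=(\nabla_{Z}ric)(X,Y)$ and evaluate both sides using $ric|_{D_{\pm\lambda}}=\pm\lambda\,g$ and orthogonality of the two eigenspaces. This should collapse to
\[
2\lambda\,g(\nabla_{X}Y,Z)=(Z\lambda)\,g(X,Y),
\]
together with its mirror image obtained by exchanging the two eigenvalues. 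The right-hand side is symmetric in $X$ and $Y$, which immediately makes $D_{\lambda}$ and $D_{-\lambda}$ integrable and exhibits each leaf as a totally umbilic surface whose mean curvature vector is the component of $\nabla\lambda/\lambda$ in the complementary eigenspace.

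The hard part is to force $\lambda$ to be locally constant, and this is exactly where being K\"ahler is indispensable: each leaf of $D_{\pm\lambda}$ is $J$-invariant, hence a complex curve in the K\"ahler surface, and a complex submanifold of a K\"ahler manifold is minimal (its second fundamental form satisfies $B(JX,JY)=-B(X,Y)$, so its trace vanishes). Thus the mean curvature vectors above are zero, $\nabla\lambda=0$ throughout $U$, and $\lambda$ is a nonzero constant on each connected component of $U$. The umbilicity relation then degenerates to $g(\nabla_{X}Y,Z)=0$, so the leaves of both $D_{\lambda}$ and $D_{-\lambda}$ are totally geodesic; being orthogonal complements, the two distributions are parallel, and de Rham's decomposition theorem yields a local isometric splitting $N_{1}^{2}\times N_{2}^{2}$, with $N_{1},N_{2}$ of constant Gauss curvature $\lambda$ and $-\lambda$ by the reading of $ric$ on the two factors. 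On the interior of $\{\lambda=0\}$ the metric is Ricci-flat with vanishing Weyl tensor, hence flat; and since $|ric_{0}|^{2}=4\lambda^{2}$ is continuous while $\lambda$ is locally constant on $U$, no point can lie on the boundary of $U$, so every point of $M$ has a neighbourhood that is either flat or such a product.
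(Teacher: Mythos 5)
The paper does not prove this statement at all --- it is quoted as Tanno's theorem with a citation to [23] --- so there is no in-paper argument to compare against; what you have written is an independent, essentially self-contained proof, and as far as I can check it is correct. The skeleton is sound: $W_{+}=0$ plus the K\"ahler eigenvalue relation forces $s\equiv 0$; vanishing Weyl in dimension four kills the Cotton tensor, so with $s=0$ the Ricci tensor is Codazzi; the Codazzi identity evaluated on the two $J$-invariant eigendistributions of the trace-free Ricci endomorphism gives $2\lambda\,g(\nabla_{X}Y,Z)=(Z\lambda)g(X,Y)$, hence integrability and total umbilicity; and the genuinely clever step --- that the leaves are complex curves in a K\"ahler surface and therefore minimal --- forces $\nabla\lambda=0$, after which totally geodesic complementary foliations plus de Rham give the product, with Gauss curvatures $\lambda$ and $-\lambda$ read off from $ric$. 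The only place I would tighten the write-up is the final topological step: ``$\lambda$ is locally constant on $U$, so no point lies on $\partial U$'' is a little quick, since a priori infinitely many components of $U$ with constants tending to $0$ could accumulate at a boundary point. The correct one-line repair is to fix a single component $V$ of $U$, note $\lambda\equiv c\neq 0$ on $V$, and observe that continuity forces $\lambda=c$ on $\overline{V}$, so $V$ is clopen; on a connected $M$ this yields the clean dichotomy (either $ric_{0}\equiv 0$ and the metric is flat, or $ric_{0}$ never vanishes and the global product structure applies), which is in fact slightly stronger than the pointwise-local statement you end with.
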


\begin{Theorem}
Let $(M, g, \omega)$ be a compact, self-dual almost-K\"ahler 4-manifold with zero scalar curvature. 
Then $(M, g, \omega)$ is conformally flat K\"ahler surface and therefore, 
$(M, g)$ is either locally flat or locally a product space of 2-dimensional riemannian manifolds of constant
curvature $K$ and $-K$.
\end{Theorem}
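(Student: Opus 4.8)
The plan is to reduce the statement to the integrable (K\"ahler) case and then quote Theorem~1. The K\"ahler case is short: if $(M,g,\omega)$ is K\"ahler it is automatically scalar-flat K\"ahler, and on a K\"ahler surface the self-dual Weyl tensor is, pointwise, $W_{+}=\mathrm{diag}\left(\tfrac{s}{6},-\tfrac{s}{12},-\tfrac{s}{12}\right)$ in the standard orthonormal frame of $\Lambda^{+}$ whose first vector is $\omega/\sqrt{2}$; with $s\equiv 0$ this gives $W_{+}\equiv 0$. Since $(M,g)$ is self-dual we also have $W_{-}\equiv 0$, so the full Weyl tensor vanishes, $(M,g)$ is conformally flat, and Theorem~1 (Tanno) yields exactly the asserted dichotomy. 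Hence it is enough to prove that $J$ is integrable.

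I would prove integrability by contradiction. If $(M,g,\omega)$ is not K\"ahler then, by the computation in Proposition~1, $s^{*}=s+|\nabla\omega|^{2}=|\nabla\omega|^{2}$ is not identically zero; plugging this into $4\pi\,c_{1}\cdot[\omega]=\int_{M}\tfrac{s+s^{*}}{2}\,d\mu=\tfrac12\int_{M}s^{*}\,d\mu$ gives $c_{1}\cdot[\omega]>0$, so by [18],[24] (exactly as in Proposition~1) $M$ is a rational or ruled surface. Now self-duality imposes two topological constraints. The Hirzebruch signature formula gives $\tau(M)=\frac{1}{12\pi^{2}}\int_{M}|W_{+}|^{2}\,d\mu\ge 0$, and the companion identity $2\chi(M)-3\tau(M)=\frac{1}{4\pi^{2}}\int_{M}\big(\tfrac{s^{2}}{24}-\tfrac{|ric_{0}|^{2}}{2}+2|W_{-}|^{2}\big)\,d\mu$, evaluated with $W_{-}\equiv 0$ and $s\equiv 0$, gives $2\chi(M)-3\tau(M)=-\frac{1}{8\pi^{2}}\int_{M}|ric_{0}|^{2}\,d\mu\le 0$, i.e.\ $2\chi(M)\le 3\tau(M)$.

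Next I would run these two inequalities against the list of rational and ruled surfaces. A rational or ruled surface has $\tau\ge 0$ exactly when it is $\mathbb{CP}_{2}$ or an $S^{2}$-bundle over a closed surface $\Sigma_{g}$ ($g\ge 0$); all others have $\tau<0$ and are excluded by $\tau(M)\ge 0$. The surface $\mathbb{CP}_{2}$ has $2\chi=6>3=3\tau$, so it is excluded by $2\chi\le 3\tau$. Finally every $S^{2}$-bundle over $\Sigma_{g}$ has $\tau=0$, so $\int_{M}|W_{+}|^{2}\,d\mu=12\pi^{2}\tau(M)=0$ and $W_{+}\equiv 0$; combined with $s\equiv 0$, the identity $s^{*}=2\big(W_{+}(\omega,\omega)+\tfrac{s}{12}|\omega|^{2}\big)$ forces $s^{*}\equiv 0$, so $(M,g,\omega)$ is K\"ahler, contradicting our assumption. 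This contradiction shows $J$ is integrable, and the first paragraph completes the proof.

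The main obstacle is precisely this integrability step: it leans on Proposition~1 to pin down the diffeomorphism type in the non-K\"ahler case, and on the observation that self-duality together with $s\equiv 0$ forces both $\tau(M)\ge 0$ and $2\chi(M)\le 3\tau(M)$, a combination that no rational or ruled surface meets except, harmlessly, the $S^{2}$-bundles, on which $W_{+}$ (and therefore $\nabla\omega$) must then vanish anyway. If one wished to bypass the topological bookkeeping, the alternative would be to establish $W_{+}\equiv 0$ directly by an integral Weitzenb\"ock (Sekigawa-type) argument applied to the harmonic self-dual $2$-form $\omega$; that route is more computational and I would resort to it only as a backup.
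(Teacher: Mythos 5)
Your proposal is correct and follows essentially the same route as the paper: reduce via Proposition~1 to the scalar-flat K\"ahler case (where Tanno's theorem applies) or to a rational or ruled surface, then use the signature formula $\tau=\frac{1}{12\pi^{2}}\int|W_{+}|^{2}d\mu\geq 0$ together with $2\chi-3\tau\leq 0$ to exclude $\mathbb{CP}_{2}$ and force $W_{+}\equiv 0$ on the remaining candidates, whence $s^{*}=\frac{s}{3}+2W_{+}(\omega,\omega)=s+|\nabla\omega|^{2}$ gives $\nabla\omega=0$. The only cosmetic difference is that the paper organizes the case analysis by $b_{-}=0$ versus $b_{-}\neq 0$ (deducing $b_{-}=1$ and hence $\tau=0$ from $b_{+}=1$), while you argue directly from the list of rational and ruled surfaces with $\tau\geq 0$.
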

\begin{proof}
We use Proposition 1. 
If $(M, g, \omega)$ is self-dual and scalar-flat K\"ahler, 
then the result follows from Theorem 1. 

Suppose $M$ is diffeomorphic to a rational or ruled surface. 
If $b_{-}=0$, then $M$ is diffeomorphic to $\mathbb{CP}_{2}$. 
From the following formula, 
\[2\chi-3\tau=\frac{1}{4\pi^{2}}\int_{M}\left(\frac{s^{2}}{24}+2|W_{-}|^{2}-\frac{|ric_{0}|^{2}}{2}\right)d\mu,\]
we get $2\chi-3\tau\leq 0$ for a self-dual metric with zero scalar curvature. 
However, for $\mathbb{CP}_{2}$, $2\chi-3\tau>0$.
Thus, a manifold diffeomorphic to $\mathbb{CP}_{2}$ does not admit a self-dual metric with zero scalar curvature.

Suppose $b_{-}\neq 0$.
Since $g$ is self-dual, we get $b_{-}=1$ and thus, $g$ is anti-self-dual. 
Then $(M, g, \omega)$ is self-dual and anti-self-dual almost-K\"ahler with zero scalar curvature. 
On the other hand, from the following formula for an almost-K\"ahler four-manifold, 
\[s^{*}\equiv\frac{s}{3}+2W_{+}(\omega, \omega)=s+|\nabla\omega|^{2}.\]
we get $s^{*}=\frac{s}{3}$ for an anti-self-dual almost-K\"ahler four manifold. 
Since $s=0$, we have $s^{*}=0$ and $\nabla\omega=0$. 
Thus, $(M, g, \omega)$ is K\"ahler. 
Since $s=0$, $(M, g, \omega)$ is a conformally flat K\"ahler surface. 
\end{proof}

\vspace{50pt}

\section{\large\textbf{Almost-K\"ahler four manifolds with nonnegative biorthogonal curvature}}\label{S:Intro}
We consider a compact almost-K\"ahler four manifold with $K^{\perp}\geq 0$.
Since $W_{\pm}$ is symmetric, there exist eigenvectors $\alpha_{i}^{+}$ with eigenvalues $\lambda_{i}^{\pm}$, $1\leq i\leq 3$ at $p$. 
We assume $\lambda_{1}^{\pm}\leq \lambda_{2}^{\pm}\leq \lambda_{3}^{\pm}$.
We define two functions $\kappa^{\pm}:M^{4}\to \mathbb{R}$ by 
\[\kappa^{\pm}:=\frac{s}{3}-2\lambda_{3}^{\pm}.\]

\newtheorem{Lemma}{Lemma}
\begin{Lemma}
Let $(M, g)$ be a compact, smooth oriented riemannian four manifold and there exists a self-dual harmonic 2-form of constant length. 
Then $\kappa^{+}\leq 0$. 
\end{Lemma}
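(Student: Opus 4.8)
The plan is to argue at an arbitrary point $p \in M$ using the Weitzenböck formula for self-dual harmonic 2-forms, exactly as in Proposition 1, but now extracting a pointwise inequality rather than an integrated one. Let $\phi$ be the given self-dual harmonic 2-form of constant length; after rescaling we may assume $|\phi|^2 \equiv 2$. Since $\phi$ is harmonic, the Weitzenböck formula $\Delta \phi = \nabla^*\nabla \phi - 2W_+(\phi,\cdot) + \frac{s}{3}\phi$ gives, upon pairing with $\phi$ and using $\Delta|\phi|^2 = 0$,
\[
0 = |\nabla\phi|^2 - 2W_+(\phi,\phi) + \frac{s}{3}|\phi|^2 = |\nabla\phi|^2 - 2W_+(\phi,\phi) + \frac{2s}{3}.
\]
Hence $W_+(\phi,\phi) = \frac{s}{3} + \frac{1}{2}|\nabla\phi|^2 \geq \frac{s}{3}$ at every point.

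The key step is then a linear-algebra bound: since $|\phi|^2 = 2$, write $\phi = \sqrt{2}\,\hat\phi$ with $\hat\phi$ a unit self-dual 2-form, and decompose $\hat\phi$ in the eigenbasis $\{\alpha_1^+,\alpha_2^+,\alpha_3^+\}$ of $W_+$ (normalized to unit length). Writing $\hat\phi = \sum_i c_i \alpha_i^+$ with $\sum_i c_i^2 = 1$, we get
\[
W_+(\hat\phi,\hat\phi) = \sum_{i=1}^{3} \lambda_i^+ c_i^2 \leq \lambda_3^+ \sum_{i=1}^{3} c_i^2 = \lambda_3^+,
\]
so $W_+(\phi,\phi) = 2 W_+(\hat\phi,\hat\phi) \leq 2\lambda_3^+$. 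Combining with the Weitzenböck estimate yields $\frac{s}{3} \leq W_+(\phi,\phi) \leq 2\lambda_3^+$, i.e. $\kappa^+ = \frac{s}{3} - 2\lambda_3^+ \leq 0$ at $p$. Since $p$ was arbitrary, $\kappa^+ \leq 0$ on $M$.

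I do not expect a serious obstacle here; the argument is essentially a pointwise repackaging of the computation already carried out in Proposition 1. The only mild subtlety is bookkeeping the normalization constants ($|\phi|^2 = 2$ versus a unit form, and the factor of $2$ in $s^* = 2R(\omega,\omega)$), and confirming that the Weitzenböck sign conventions in the excerpt are used consistently so that the term $\frac{1}{2}|\nabla\phi|^2$ indeed enters with a favorable sign. Note also that compactness and smoothness are used only to guarantee that $\Delta|\phi|^2 = 0$ is legitimate and that the pointwise statement holds everywhere; no integration or maximum principle is actually needed for the inequality $\kappa^+ \leq 0$ itself.
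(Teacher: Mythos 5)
Your proof is correct and follows essentially the same route as the paper: apply the Weitzenb\"ock formula, use the constant-length hypothesis to kill $\Delta|\phi|^2$, and bound $W_+(\phi,\phi)$ by $\lambda_3^+|\phi|^2$ via the eigenbasis decomposition. The only cosmetic difference is your normalization $|\phi|^2\equiv 2$ (the paper just keeps $|\phi|^2=c>0$), which tacitly uses that the form is non-trivial, as does the paper.
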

\begin{proof}
Let $\alpha$ be a self-dual harmonic 2-form of constant length. 
From the Weitzenb\"ock formula, 
\[\Delta\alpha=\nabla^{*}\nabla\alpha-2W_{+}(\alpha, \cdot)+\frac{s}{3}\alpha,\]
we get
\[0=\frac{1}{2}\Delta|\alpha|^{2}+|\nabla\alpha|^{2}-2W_{+}(\alpha, \alpha)+\frac{s}{3}|\alpha|^{2}.\]
From this, we get
\[\frac{s}{3}|\alpha|^{2}-2W_{+}(\alpha, \alpha)\leq 0.\]
Let $\alpha=a\alpha_{1}^{+}+a_{2}\alpha_{2}^{+}+a_{3}\alpha_{3}^{+}$, 
where $\alpha_{i}^{+}$ are eigenvectors of $W_{+}$ with eigenvalues $\lambda_{i}^{+}$. 
Let $c:=a_{1}^{2}+a_{2}^{2}+a_{3}^{2}$ be a positive constant. 
Then we have 
\[0\geq\frac{s}{3}|\alpha|^{2}-2W_{+}(\alpha, \alpha)=\frac{s}{3}(a_{1}^{2}+a_{2}^{2}+a_{3}^{2})
-2(\lambda_{1}^{+}a_{1}^{2}+\lambda_{2}^{+}a_{2}^{2}+\lambda_{3}^{+}a_{3}^{2})\geq \left(\frac{s}{3}-2\lambda_{3}^{+}\right)c=\kappa^{+}c.\]

\end{proof}

\begin{Lemma}
Let $(M, g)$ be a compact, smooth oriented riemannian manifold  with nonnegative biorthogonal curvature. 
Then we have $\kappa^{+}+\kappa^{-}\geq 0$. Equivalently, $K^{\perp}\leq\frac{s}{4}$.
Moreover, the scalar curvature is nonnegative.

\end{Lemma}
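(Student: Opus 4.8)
The plan is to reduce everything to a pointwise statement at a fixed point $p\in M$: first express the biorthogonal curvature of a $2$-plane in terms of $s$, $W_+$ and $W_-$, then extremize over all planes, and finish with elementary inequalities for trace-free symmetric operators. Concretely, for $P=\operatorname{span}(e_1,e_2)$ with $P^\perp=\operatorname{span}(e_3,e_4)$ in a positively oriented orthonormal frame, set $\hat\alpha:=\tfrac{1}{\sqrt2}(e_1\wedge e_2+e_3\wedge e_4)\in\Lambda^+$ and $\hat\beta:=\tfrac{1}{\sqrt2}(e_1\wedge e_2-e_3\wedge e_4)\in\Lambda^-$; these are unit vectors with $e_1\wedge e_2=\tfrac{1}{\sqrt2}(\hat\alpha+\hat\beta)$ and $e_3\wedge e_4=\tfrac{1}{\sqrt2}(\hat\alpha-\hat\beta)$. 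Since the off-diagonal block $ric_0$ of $\mathfrak R$ interchanges $\Lambda^+$ and $\Lambda^-$, one has $\langle\mathfrak R\hat\alpha,\hat\alpha\rangle=W_+(\hat\alpha,\hat\alpha)+\tfrac{s}{12}$ and $\langle\mathfrak R\hat\beta,\hat\beta\rangle=W_-(\hat\beta,\hat\beta)+\tfrac{s}{12}$, while $K(P)+K(P^\perp)=\langle\mathfrak R\hat\alpha,\hat\alpha\rangle+\langle\mathfrak R\hat\beta,\hat\beta\rangle$ once the mixed terms cancel. This gives
\[K^\perp(P)=\frac{s}{12}+\tfrac12 W_+(\hat\alpha,\hat\alpha)+\tfrac12 W_-(\hat\beta,\hat\beta).\]

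Next I would show that as $P$ ranges over all $2$-planes at $p$, the pair $(\hat\alpha,\hat\beta)$ ranges over all pairs consisting of a unit self-dual and a unit anti-self-dual form. Indeed, for any such pair the form $\tfrac{1}{\sqrt2}(\hat\alpha+\hat\beta)$ has self-dual and anti-self-dual parts of equal length, hence is decomposable and of unit norm, so equals $e_1\wedge e_2$ for some orthonormal pair $e_1,e_2$, and the recipe above returns exactly $\hat\alpha,\hat\beta$. Consequently $\min_P K^\perp(P)=\tfrac{s}{12}+\tfrac12(\lambda_1^++\lambda_1^-)$ and $\max_P K^\perp(P)=\tfrac{s}{12}+\tfrac12(\lambda_3^++\lambda_3^-)$, and the hypothesis $K^\perp\ge0$ forces $\lambda_1^++\lambda_1^-\ge-\tfrac{s}{6}$.

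It then remains to run the algebra. Using that $W_\pm$ is trace-free, $\lambda_1^\pm+\lambda_2^\pm+\lambda_3^\pm=0$, together with $\lambda_1^\pm\le\lambda_2^\pm$, gives $\lambda_3^\pm=-(\lambda_1^\pm+\lambda_2^\pm)\le-2\lambda_1^\pm$, whence $\lambda_3^++\lambda_3^-\le-2(\lambda_1^++\lambda_1^-)\le\tfrac{s}{3}$. This is precisely $\kappa^++\kappa^-=\tfrac{2s}{3}-2(\lambda_3^++\lambda_3^-)\ge0$, and feeding $\lambda_3^++\lambda_3^-\le\tfrac{s}{3}$ back into the formula for $\max_P K^\perp(P)$ yields $K^\perp\le\tfrac{s}{12}+\tfrac{s}{6}=\tfrac{s}{4}$, the claimed equivalent form (the equivalence being visible from $\kappa^++\kappa^-=4(\tfrac{s}{4}-\max_P K^\perp)$). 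For the last assertion one can either note that $\lambda_3^\pm\ge0$, so $0\le\lambda_3^++\lambda_3^-\le\tfrac{s}{3}$ forces $s\ge0$; or observe directly that in any orthonormal frame $s=4\big(K^\perp(P_{12})+K^\perp(P_{13})+K^\perp(P_{14})\big)$, where $P_{ij}=\operatorname{span}(e_i,e_j)$, which is nonnegative by hypothesis.

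The step that needs the most care — the main obstacle — is the realizability claim in the second paragraph: that every pair of a unit self-dual and a unit anti-self-dual $2$-form arises as the normalized self-dual/anti-self-dual splitting of a decomposable form $e_1\wedge e_2$. This rests on the standard fact that a $2$-form on a $4$-dimensional oriented inner product space is decomposable exactly when its self-dual and anti-self-dual components have equal norm. Once this and the curvature identity of the first paragraph are in place, the rest is bookkeeping with the eigenvalues of the trace-free operators $W_\pm$.
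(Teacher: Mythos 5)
Your proof is correct and follows essentially the same route as the paper: the heart of both arguments is the pair of identities $2K_{1}^{\perp}=\tfrac{s}{6}+\lambda_{1}^{+}+\lambda_{1}^{-}$ and $2K_{3}^{\perp}=\tfrac{s}{6}+\lambda_{3}^{+}+\lambda_{3}^{-}$ followed by the trace-free eigenvalue inequality $\lambda_{3}^{\pm}\leq-2\lambda_{1}^{\pm}$. The only difference is that the paper cites these identities from the literature, whereas you prove them via the decomposability criterion ($|\omega^{+}|=|\omega^{-}|$) for $2$-planes, which is a correct and welcome addition.
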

\begin{proof}

Let $P$ be a 2-plane and $K$ be a sectional curvature. Then we have [3], [11] 
\[2K_{1}^{\perp}=\frac{s}{6}+\lambda_{1}^{+}+\lambda_{1}^{-}\]
\[2K_{3}^{\perp}=\frac{s}{6}+\lambda_{3}^{+}+\lambda_{3}^{-},\]
where 
\[K_{1}^{\perp}=\min\limits_{P\in T_{p}M}\left(\frac{K(P)+K(P^{\perp})}{2}\right)\]
\[K_{3}^{\perp}=\max\limits_{P\in T_{p}M}\left(\frac{K(P)+K(P^{\perp})}{2}\right).\]
Since $W_{\pm}$ is trace-free, we have
$\lambda_{1}^{\pm}+ \lambda_{2}^{\pm}+ \lambda_{3}^{\pm}=0$ and therefore, 
\[-\frac{1}{2}\lambda_{1}^{\pm}\leq \lambda_{3}^{\pm}\leq -2\lambda_{1}^{\pm}.\]
Since $K^{\perp}\geq 0$, we have 
\[0\leq \frac{s}{6}+\lambda_{1}^{+}+\lambda_{1}^{-}\leq \frac{s}{6}-\frac{\lambda_{3}^{+}}{2}-\frac{\lambda_{3}^{-}}{2}.\]
From this, we get 
\[0\leq (\frac{s}{12}-\frac{\lambda_{3}^{+}}{2})+ (\frac{s}{12}-\frac{\lambda_{3}^{-}}{2}).\]
Then we have 
\[0\leq \frac{2s}{3}-2\lambda_{3}^{+}-2\lambda_{3}^{-}=\kappa^{+}+\kappa^{-}=s-4K_{3}^{\perp}\leq s-4K^{\perp}.\]
From $0\leq \frac{2s}{3}-2\lambda_{3}^{+}-2\lambda_{3}^{-}$ and $\lambda_{3}^{\pm}\geq 0$, we get $s$ is nonnegative. 

\end{proof}

\begin{Lemma}
Let $(M, g)$ be a compact, smooth oriented riemannian four manifold with nonnegative biorthogonal curvature. 
Suppose there exists a self-dual harmonic 2-form of constant length. 
Then $\frac{s}{6}-W_{-}\geq 0$. 
\end{Lemma}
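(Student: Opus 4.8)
The plan is to combine the two eigenvalue inequalities from Lemma 1 and Lemma 2 with the spectral structure of $W_-$. From Lemma 1, the existence of a self-dual harmonic $2$-form of constant length gives $\kappa^+ \leq 0$, i.e. $\frac{s}{3} \leq 2\lambda_3^+$, or equivalently $\lambda_3^+ \geq \frac{s}{6}$. From Lemma 2, nonnegative biorthogonal curvature gives $\kappa^+ + \kappa^- \geq 0$, i.e. $\lambda_3^+ + \lambda_3^- \leq \frac{s}{3}$. Subtracting, we obtain $\lambda_3^- \leq \frac{s}{3} - \lambda_3^+ \leq \frac{s}{3} - \frac{s}{6} = \frac{s}{6}$. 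So the largest eigenvalue of $W_-$ satisfies $\lambda_3^- \leq \frac{s}{6}$.

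Next I would translate this eigenvalue bound into an operator inequality. Since $W_-$ is a symmetric endomorphism of $\Lambda^-$ with eigenvalues $\lambda_1^- \leq \lambda_2^- \leq \lambda_3^-$, the bound $\lambda_3^- \leq \frac{s}{6}$ says precisely that all eigenvalues of $W_-$ are $\leq \frac{s}{6}$, hence all eigenvalues of $\frac{s}{6}\,\mathrm{Id} - W_-$ are nonnegative, i.e. $\frac{s}{6} - W_- \geq 0$ as a quadratic form on $\Lambda^-$. This is the claimed conclusion.

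I do not expect a serious obstacle here; the lemma is essentially an algebraic consequence of the two preceding lemmas once one notices that the hypotheses supply matching upper and lower bounds for $\lambda_3^+$. The one point to be careful about is the interpretation of the statement: $\frac{s}{6} - W_-$ must be read as the endomorphism $\frac{s}{6}\,\mathrm{Id} - W_-$ of $\Lambda^-$ (the scalar curvature $s$ being a function on $M$), and the inequality is pointwise on $M$. It is also worth recording that by Lemma 2 we already know $s \geq 0$, which is consistent with $\lambda_3^- \geq 0$ (valid since $W_-$ is trace-free, so its top eigenvalue is nonnegative) forcing $\frac{s}{6} \geq \lambda_3^- \geq 0$; this gives a sanity check but is not needed for the proof itself.
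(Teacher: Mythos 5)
Your argument is correct and is essentially the paper's own proof: the paper combines $\kappa^+\leq 0$ (Lemma 1) with $\kappa^++\kappa^-\geq 0$ (Lemma 2) to get $\kappa^-\geq 0$, i.e.\ $\lambda_3^-\leq \frac{s}{6}$, which is exactly your subtraction step, and then reads this off as the operator inequality $\frac{s}{6}-W_-\geq 0$. Your more explicit spelling out of the eigenvalue bookkeeping is fine and adds nothing that conflicts with the paper.
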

\begin{proof}
From $\kappa^{+}\leq 0$ and $\kappa^{+}+\kappa^{-}\geq 0$, we get $\kappa^{-}\geq 0$.
Since $\left(\frac{s}{6}-W_{-}\right)\geq \kappa^{-}$, we get the result.

\end{proof}

\newtheorem{Corollary}{Corollary}
\begin{Corollary}
Let $(M, g, \omega)$ be a compact almost-K\"ahler four manifold with nonnegative biorthogonal curvature. 
Then $\frac{s}{6}-W_{-}\geq 0$.
\end{Corollary}
\begin{proof}
We note that $\omega$ is a self-dual harmonic 2-form of constant length. 
\end{proof}

\begin{Theorem}
Let $(M, g, \omega)$ be a compact almost-K\"ahler four manifold with nonnegative biorthogonal curvature.
Then 
\begin{itemize}
\item $M$ is diffeomorphic to $\mathbb{CP}_{2}$; or
\item $(M, g, \omega)$ is locally flat K\"ahler; or
\item $(M, g, \omega)$ is K\"ahler 
and $(M, g)$ is locally a product space of 2-dimensional riemannian manifolds. 
\end{itemize}

\end{Theorem}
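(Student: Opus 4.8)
The plan is to split the argument according to the sign of the scalar curvature, which by Lemma~2 is nonnegative, and then to treat the cases $s\equiv 0$ and $s\not\equiv 0$ (or more precisely, whether $s$ vanishes identically or somewhere or nowhere) separately, feeding each into the structural results already established. First I would invoke Corollary~1, which gives $\frac{s}{6}-W_{-}\geq 0$ as an operator inequality on all of $M$; combined with Lemma~2's $s\geq 0$ this already forces $W_{-}$ to be ``small'' relative to $s$, and in particular pins down $W_{-}$ completely wherever $s=0$, namely $W_{-}=0$ there.

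Next, for the case $s\equiv 0$: then $\frac{s}{6}-W_{-}\geq 0$ forces $W_{-}\leq 0$ as an operator, but $W_{-}$ is trace-free, so $W_{-}\equiv 0$ and the manifold is self-dual. Now Theorem~2 applies verbatim: a compact self-dual almost-K\"ahler $4$-manifold with zero scalar curvature is conformally flat K\"ahler, hence by Tanno's theorem (Theorem~1) either locally flat or locally a product of two surfaces of curvature $K$ and $-K$; since $s\equiv 0$ this is consistent, and one lands in the second or third bullet. (In the locally-product subcase one should note the two factors have opposite constant curvatures, so the ``product of $2$-dimensional riemannian manifolds'' clause is exactly what Tanno gives.)

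For the case $s\not\equiv 0$: since $s\geq 0$ everywhere and $s$ is not identically zero, $\int_M s\,d\mu>0$. I would then use the almost-K\"ahler identity $s^{*}=s+|\nabla\omega|^{2}\geq s\geq 0$ from Proposition~1, so that $\int_M\frac{s+s^{*}}{2}\,d\mu>0$, whence $4\pi c_1\cdot[\omega]>0$ via the quoted formula, and by [18],[24] $(M,\omega)$ is rational or ruled. At this point I want to rule out everything except $\mathbb{CP}_2$, or else force the metric to degenerate to one of the flat/product cases — but a nonnegative-biorthogonal-curvature hypothesis alone does not kill all rational or ruled surfaces, so the real content is to use the curvature positivity more carefully. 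The natural move is: nonnegative biorthogonal curvature plus $W_-$ controlled by $s$ gives enough pinching that, together with $b_-$ information from the signature/Euler formula $2\chi-3\tau=\frac{1}{4\pi^2}\int_M\bigl(\frac{s^2}{24}+2|W_-|^2-\frac{|ric_0|^2}{2}\bigr)d\mu$ and the analogous $2\chi+3\tau$ formula, one concludes $b_-=0$ (hence $M\simeq\mathbb{CP}_2$) unless the metric is scalar-flat after all — contradicting $s\not\equiv0$ — or splits.

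The main obstacle I anticipate is precisely this last step: extracting a clean dichotomy ``$\mathbb{CP}_2$ or degenerate'' from the mixed data of $c_1\cdot[\omega]>0$, the operator bound $\frac{s}{6}-W_-\geq 0$, and nonnegative $K^{\perp}$. One likely needs to argue that if $b_-\neq 0$ then a nonzero anti-self-dual harmonic $2$-form exists, run the Weitzenb\"ock argument for it against the bound $\frac{s}{6}-W_-\geq 0$ (the mirror of Lemma~1/Lemma~3) to force that form to be parallel and $s\equiv 0$ on its zero-free locus, and then bootstrap to $s\equiv 0$ globally — contradicting the case assumption unless the manifold is in fact K\"ahler and splits. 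Handling the almost-K\"ahler (non-integrable) subtleties here — e.g.\ upgrading ``$\nabla$(some form)$=0$'' to an honest local product or to integrability of $J$ — is where the care is needed, and I would lean on the identity $s^{*}=\frac{s}{3}+2W_+(\omega,\omega)$ together with $\nabla\omega=0$ forcing the K\"ahler condition, exactly as in the proof of Theorem~2.
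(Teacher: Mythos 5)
Your case split ($s\equiv 0$ versus $s\not\equiv 0$) and your treatment of the $s\equiv 0$ case coincide with the paper's: $\frac{s}{6}-W_{-}\geq 0$ plus trace-freeness gives $W_{-}=0$, and Theorem~2 finishes. Your route to ``rational or ruled'' in the $s\not\equiv 0$ case (via $s^{*}=s+|\nabla\omega|^{2}$ and $c_{1}\cdot[\omega]>0$) is a legitimate variant of the paper's (which instead passes through the existence of a positive scalar curvature metric). The problem is the $b_{-}\neq 0$ subcase, which you correctly flag as the main obstacle but then resolve incorrectly. The Weitzenb\"ock argument applied to a nontrivial anti-self-dual harmonic form $\alpha$ under $\frac{s}{6}-W_{-}\geq 0$ yields $\nabla\alpha=0$ and $\bigl(\frac{s}{6}-W_{-}\bigr)(\alpha,\alpha)=0$ pointwise; it does \emph{not} force $s\equiv 0$ anywhere, so there is no contradiction with $s\not\equiv 0$ to be had. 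Indeed the product case genuinely occurs with $s>0$ (e.g.\ $S^{2}\times S^{2}$), so any argument aiming to ``bootstrap to $s\equiv 0$ globally'' in this branch must fail. Likewise, the signature/Euler-characteristic formulas you invoke cannot by themselves give $b_{-}=0$ here.

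The missing idea is an orientation reversal. Since $\nabla\alpha=0$ and $\alpha$ has constant (nonzero) length, $(\overline{M},g,\alpha)$ is a K\"ahler structure for the \emph{reversed} orientation, still with $K^{\perp}\geq 0$ (the biorthogonal curvature is orientation-independent). Applying Lemma~3 (equivalently Corollary~1) to $(\overline{M},g,\alpha)$ gives $\frac{s}{6}-W_{+}\geq 0$ for the original orientation. Feeding \emph{this} bound into the Weitzenb\"ock formula for the self-dual harmonic form $\omega$ of constant length forces $\nabla\omega=0$, so $(M,g,\omega)$ is K\"ahler. Finally, the two orthogonal parallel $2$-forms $\omega$ and $\alpha$ produce complementary parallel distributions, and the de~Rham decomposition theorem gives the local product splitting --- which is the third bullet of the statement, not a contradiction. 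Without this orientation-reversal step your proof does not close the $b_{-}\neq 0$ case.
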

\begin{proof}
Since $W_{-}$ is trace-free and $\frac{s}{6}-W_{-}\geq0$, we get $s$ is nonnegative. 
Suppose $s$ is not identically zero. 
When $M$ carries a metric with nonnegative scalar curvature which is not identically zero, 
$M$ admits a metric with constant positive scalar curvature and zero scalar curvature [2]. 
Therefore, $M$ admits a metric of positive scalar curvature. 
Then by [18], [21], $M$ is diffeomorphic to a rational or ruled surface.
If $b_{-}=0$, $M$ is diffeomorphic to $\mathbb{CP}_{2}$.

Suppose $b_{-}\neq 0$.
Then, there exists a non-trivial anti-self-dual harmonic 2-form $\alpha$ on $M$. 
From the Weitzenb\"ock formula, we have
\[\int_{M}\left(|\nabla\alpha|^{2}+\left(\frac{s}{3}-2W_{-}\right)(\alpha, \alpha)\right)d\mu=0.\]
Since $\frac{s}{6}-W_{-}\geq 0$, $\nabla\alpha=0$.
Then $(\overline{M}, g, \alpha)$ is K\"ahler with $K^{\perp}\geq 0$. 
Then by Lemma 3, we get $\frac{s}{6}-W_{-}\geq 0$ for $(\overline{M}, g)$. 
Since $(\overline{M}, g, \alpha)$ is K\"ahler and $\overline{M}$ admits a metric of positive scalar curvature, 
we get $\overline{M}$ is diffeomorphic to a rational or ruled surface. 
In particular, $b_{+}(\overline{M})=1$, which implies $b_{-}(M)=1$. 
From this, we get $M$ is diffeomorphic to $S^{2}$-bundle over $\Sigma_{g}$ for $g\geq 0$. 
By reversing the orientation, we get $\frac{s}{6}-W_{+}\geq 0$ for $(M, g)$. 
Then $\omega$ is parallel. 
Let $p\in M$ and 
\[V_{1}=\{w\in T_{p}M|\alpha(w, v)=0, \forall v\in T_{p}M\}\]
\[V_{2}=\{w\in T_{p}M|\omega(w, v)=0, \forall v\in T_{p}M\}.\]
Since $\left<\alpha, \omega\right>=0$, $V_{1}\perp V_{2}$. 
Let $P$ be a parallel transport at $p$. 
Since $P$ is an isometry, there exists nonzero $v'$ such that $v=P(v')$ for any nonzero $v\in T_{p}M$. 
Since $\alpha$ is parallel, we have [22] 
\[\alpha(Pw, v)=\alpha(Pw, Pv')=\alpha(w, v')=0. \]
Thus, $V_{1}$ is an invariant subspace. 
Similarly, $V_{2}$ is an invariant subspace. 
They by de Rham decomposition theorem, $g$ is locally a product metric.

Suppose $s\equiv 0$. Since $\frac{s}{6}-W_{-}\geq 0$, we get $W_{-}=0$. 
Then the result follows from Theorem 2. 
\end{proof}

\begin{Corollary}
Let $(M, g, \omega)$ be a compact almost-K\"ahler four manifold with $K\geq0$.
Then  
\begin{itemize}
\item $M$ is diffeomorphic to $\mathbb{CP}_{2}$; or
\item $(M, g, \omega)$ is locally flat K\"ahler; or
\item $(M, g, \omega)$ is K\"ahler 
and $(M, g)$ is locally a product space of $S^{2}$ and $S^{2}$ with metrics of nonnegative curvature on each factor; or
\item $(M, g, \omega)$ is K\"ahler 
and $(M, g)$ is locally a product space of $S^{2}$ and $T^{2}$ with a flat metric on $T^{2}$ and a metric of nonnegative curvature on $S^{2}$.

\end{itemize}
\end{Corollary}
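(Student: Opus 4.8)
The plan is to derive this from Theorem 3. First note that $K\geq 0$ trivially forces $K^{\perp}\geq 0$, so Theorem 3 applies and $(M,g,\omega)$ falls into one of its three alternatives. If $M$ is diffeomorphic to $\mathbb{CP}_{2}$ or $(M,g,\omega)$ is locally flat K\"ahler we are already in one of the listed cases, so the entire content of the corollary is the refinement of the remaining alternative: $(M,g,\omega)$ K\"ahler and $(M,g)$ locally a product of two $2$-dimensional riemannian manifolds. There the stronger hypothesis $K\geq 0$ must be used to pin down the two factors.

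In that case I would argue as follows. Locally $g=g_{1}\oplus g_{2}$ on a product $\Sigma_{1}\times\Sigma_{2}$ of surfaces, and since $\omega$ (equivalently $J$) is parallel, each factor is $J$-invariant, hence a Riemann surface. The sectional curvature of the plane tangent to the $i$-th factor equals the Gauss curvature $K_{i}$ of $(\Sigma_{i},g_{i})$, while mixed planes are automatically flat; thus $K\geq 0$ gives $K_{1}\geq 0$ and $K_{2}\geq 0$. Passing to the universal cover, de Rham's theorem yields $\widetilde{M}=\widetilde{\Sigma}_{1}\times\widetilde{\Sigma}_{2}$ with each $\widetilde{\Sigma}_{i}$ complete, simply connected and nonnegatively curved, hence diffeomorphic to $S^{2}$ or to $\mathbb{R}^{2}$. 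If a factor $\widetilde{\Sigma}_{i}$ is $\mathbb{R}^{2}$, then $\pi_{1}(M)$ projects to a group acting cocompactly by isometries on it (using compactness of the other factor when that one is $S^{2}$, and compactness of $M$ when both are $\mathbb{R}^{2}$); an infinite group of isometries of a nonnegatively curved plane together with Cohn--Vossen's total curvature bound forces $K_{i}\equiv 0$, so this factor is flat $\mathbb{R}^{2}$ and the associated compact quotient, being a flat compact complex curve, is a torus. Collating the possibilities: both factors $\mathbb{R}^{2}$ gives $(M,g)$ flat, i.e. locally flat K\"ahler; one factor $S^{2}$ with $K_{1}\geq 0$ and the other a flat $T^{2}$; or both factors $S^{2}$ with nonnegative curvature. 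These are exactly the last three bullets. Alternatively one may invoke the description obtained in the proof of Theorem 3, that $M$ is an $S^{2}$-bundle over $\Sigma_{g}$, and observe that $K\geq 0$ with Gauss--Bonnet on the base forces $g\in\{0,1\}$, the genus-one base being flat.

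The step I expect to be the main obstacle is ruling out a genuinely non-flat $\mathbb{R}^{2}$ factor: this is where compactness of $M$ has to be fed in carefully, via the cocompact isometry action and a Gauss--Bonnet argument on the quotient, and one must also check that the resulting flat compact surface is a smooth torus compatible with the complex structure rather than, say, a Klein bottle or a non-smooth quotient. Once the factors are identified, assembling the four cases is routine.
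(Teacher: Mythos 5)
Your proposal is correct and follows essentially the same route as the paper: reduce to Theorem 3 and then use nonnegativity of the sectional curvature together with Gauss--Bonnet to constrain the two 2-dimensional factors. The paper's own proof is just the single observation that a nonnegatively curved metric on $T^{2}$ is flat, relying implicitly on the $S^{2}$-bundle-over-$\Sigma_{g}$ description obtained in the proof of Theorem 3 --- which is exactly the ``alternative'' argument you sketch at the end, so your longer universal-cover/Cohn--Vossen detour is more machinery than is needed.
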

\begin{proof}
We note that by Gauss-Bonnet theorem, a metric of nonnegative curvature on $T^{2}$ is flat. 

\end{proof}

\begin{Corollary}
Let $(M, g, \omega)$ be a compact almost-K\"ahler four manifold with $K^{\perp}\geq 0$. 
Suppose $(M, g)$ is not locally flat and the Ricci curvature is greater than or equal to zero. 
Then we have $0\leq c_{2}\leq c_{1}^{2}$. 
\end{Corollary}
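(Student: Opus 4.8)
The plan is to combine the Gauss--Bonnet/signature formulas for a four-manifold with the constraints on the Weyl and Ricci pieces already established in this section. Recall that for a compact oriented riemannian four-manifold
\[
2\chi+3\tau=\frac{1}{4\pi^{2}}\int_{M}\left(\frac{s^{2}}{24}+2|W_{+}|^{2}-\frac{|ric_{0}|^{2}}{2}\right)d\mu,
\]
and, for an almost-K\"ahler metric, $c_{1}^{2}=2\chi+3\tau$ and $c_{2}=\chi$. So proving $c_{2}\geq 0$ amounts to showing $\chi\geq 0$, and proving $c_{1}^{2}\leq c_{1}^{2}$ is vacuous, so the real content is $0\le c_2$ together with $c_{2}\leq c_{1}^{2}$, i.e. $\chi\le 2\chi+3\tau$, i.e. $\chi+3\tau\ge 0$, i.e. $2\chi-3\tau\le 2(2\chi+3\tau)$... more directly $c_1^2-c_2=\chi+3\tau\ge 0$. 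The two inequalities to establish are therefore $\chi\ge 0$ and $\chi+3\tau\ge 0$.

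First I would invoke the structural dichotomy of Theorem~3: since $(M,g,\omega)$ has $K^{\perp}\ge 0$ and is not locally flat, it is either diffeomorphic to $\mathbb{CP}_{2}$, or it is K\"ahler and locally a product of two surfaces. In the $\mathbb{CP}_{2}$ case one checks the inequalities directly from $\chi=3$, $\tau=1$, giving $c_{2}=3>0$ and $c_{1}^{2}=9$, so $0<c_{2}<c_{1}^{2}$. In the locally-product K\"ahler case, I would use that $(M,g)$ is covered by a product $\Sigma_{1}\times\Sigma_{2}$ of surfaces; the nonnegative Ricci hypothesis forces each factor to have nonnegative Gauss curvature, hence each factor is diffeomorphic to $S^{2}$ or is a flat torus (here Gauss--Bonnet pins down the torus case, exactly as in Corollary~2). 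Then $\chi(M)=\chi(\Sigma_{1})\chi(\Sigma_{2})\ge 0$, giving $c_{2}\ge 0$. For the signature: a product of two surfaces has $\tau=0$ (the intersection form is hyperbolic on the Künneth cross-terms and the diagonal classes are null), whence $c_{1}^{2}=2\chi\ge 2c_{2}\ge c_{2}$, and in fact $c_1^2-c_2=\chi\ge 0$.

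Alternatively, and perhaps more in the spirit of this section's curvature estimates, I would argue analytically: nonnegative Ricci curvature together with the sign constraints $\tfrac{s}{6}-W_{-}\ge 0$ and $\tfrac{s}{6}-W_{+}\ge 0$ (the latter available in the product case after orientation reversal, as in the proof of Theorem~3) bound the Weyl norms by the scalar curvature, $|W_{\pm}|^{2}\le \tfrac{s^{2}}{24}$ is too strong in general, but $2|W_{\pm}|^{2}\le \tfrac{s^{2}}{?}$-type bounds combined with $|ric_{0}|^{2}\le \tfrac{s^{2}}{?}$ from $\mathrm{Ric}\ge 0$ yield $2\chi\pm 3\tau\ge 0$ after integration, which is exactly $c_{1}^{2}\ge 0$ and $\chi+3\tau\ge 0$. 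Combining $2\chi+3\tau\ge 0$ with $2\chi-3\tau\ge 0$ gives $\chi\ge 0$, hence $c_{2}\ge 0$; and $c_{1}^{2}-c_{2}=\chi+3\tau$, which I would show is $\ge 0$ either from the product structure ($\tau=0$, $\chi\ge 0$) or from a Weitzenb\"ock estimate on anti-self-dual forms as in Theorem~3.

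The main obstacle I anticipate is getting the sign of $c_{1}^{2}-c_{2}=\chi+3\tau$ cleanly in the non-$\mathbb{CP}_{2}$ branch without just quoting the product structure: one must either rule out negative signature directly or commit to the de Rham splitting and compute $\tau=0$ for a product of surfaces. I would lean on Theorem~3's classification to reduce to the product case and then use the elementary fact that the intersection form of $\Sigma_{1}\times\Sigma_{2}$ has vanishing signature, so that all the needed inequalities reduce to $\chi(\Sigma_{1}),\chi(\Sigma_{2})\ge 0$, which is exactly what $\mathrm{Ric}\ge 0$ and Gauss--Bonnet provide.
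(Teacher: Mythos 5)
Your main argument is correct and is essentially the paper's proof: reduce via $c_{2}=\chi$ and $c_{1}^{2}=2\chi+3\tau$ to showing $\chi\geq 0$ and $\tau\geq 0$, use Theorem~3 to land in either the $\mathbb{CP}_{2}$ case or the local-product case, and in the latter use $\mathrm{Ric}\geq 0$ plus Gauss--Bonnet to exclude a base of genus $\geq 2$ (the paper phrases this as exhibiting a point of negative Ricci curvature when $g\geq 2$, which is the contrapositive of your statement that each factor must have $\chi\geq 0$). Your secondary ``analytic'' sketch with the unresolved constants is not needed and should be dropped; the first argument suffices.
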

\begin{proof}
From Theorem 3, we have $\tau\geq 0$. 
Since $c_{1}^{2}=2\chi+3\tau$, it is enough to show that $\chi\geq 0$.
Note that except $S^{2}$-bundle over $\Sigma_{g}$, $g\geq 2 $, $\chi\geq 0$.
Almost-K\"ahler metric with $K^{\perp}\geq 0$ on such a manifold is locally a product metric. 
On $\Sigma_{g}$, for $g\geq 2$, there is a point $P$ such that 
$K<0$. Let $\{e_{1}, e_{2}\}$ be an orthonormal basis on $\Sigma_{g}$ at $p$
and $\{e_{3}, e_{4}\}$ is an orthonormal basis on $S^{2}$. 
Then $K_{12}+K_{13}+K_{14}=K_{12}<0$. 
Thus, there exists a point where the Ricci curvature is less than zero. 
\end{proof}

\begin{Proposition}
Let $(M, g)$ be a compact, smooth oriented riemannian four manifold with nonnegative biorthogonal curvature. 
Suppose there exists a non-trivial harmonic 2-form $\alpha$ of constant length such that 
$|\alpha^{+}|\neq |\alpha^{-}|$ at any point, where $\alpha^{+}$ is the self-dual part of $\alpha$ and 
$\alpha^{-}$ is the anti-self-dual part of $\alpha$. 
Then 
\begin{itemize}
\item $\alpha$ is self-dual; or
\item $\alpha$ is anti-self-dual; or 
\item $\nabla\alpha^{+}=\nabla\alpha^{-}=0$, $\alpha^{+}\neq0$, $\alpha^{-}\neq 0$. 
\end{itemize}

\end{Proposition}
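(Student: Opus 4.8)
The plan is to peel off the two extreme alternatives first and then, in the remaining case, to run a Bochner argument for the two Hodge components of $\alpha$.

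First I would note that since the Hodge star commutes with the Hodge Laplacian, $\alpha$ harmonic forces $*\alpha$ harmonic, hence $\alpha^{+}=\frac12(\alpha+*\alpha)$ and $\alpha^{-}=\frac12(\alpha-*\alpha)$ are themselves harmonic $2$-forms, self-dual and anti-self-dual respectively. If $\alpha^{+}\equiv 0$ then $\alpha=\alpha^{-}$ is anti-self-dual, and if $\alpha^{-}\equiv 0$ then $\alpha$ is self-dual; these are the first two alternatives. So assume $\alpha^{+}\not\equiv 0$ and $\alpha^{-}\not\equiv 0$. The continuous function $|\alpha^{+}|^{2}-|\alpha^{-}|^{2}$ never vanishes on the connected manifold $M$, so it has constant sign; all hypotheses are invariant under reversing the orientation (which interchanges $\alpha^{+}$ and $\alpha^{-}$), so I may assume $|\alpha^{+}|>|\alpha^{-}|$ everywhere. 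Then $\alpha^{+}$ is nowhere zero and, since $|\alpha^{+}|^{2}+|\alpha^{-}|^{2}=|\alpha|^{2}$ is a positive constant, $|\alpha^{+}|^{2}>\frac12|\alpha|^{2}$ everywhere.

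In this case the goal is to show $\nabla\alpha^{+}=\nabla\alpha^{-}=0$; once that holds, $|\alpha^{+}|$ and $|\alpha^{-}|$ are positive constants, so $\alpha^{-}$ is nowhere zero as well and we land in the third alternative. Applying the Weitzenb\"ock formula of Lemma~1 to $\alpha^{+}$ and its analogue (with $W_{-}$) to $\alpha^{-}$, pairing with $\alpha^{\pm}$, using that the Levi-Civita connection preserves $\Lambda^{2}=\Lambda^{+}\oplus\Lambda^{-}$, and adding the two resulting identities (the $\Delta$-term collapses to $\frac12\Delta|\alpha|^{2}=0$), I obtain the pointwise identity
\[|\nabla\alpha^{+}|^{2}+|\nabla\alpha^{-}|^{2}=2W_{+}(\alpha^{+},\alpha^{+})+2W_{-}(\alpha^{-},\alpha^{-})-\frac{s}{3}|\alpha|^{2}.\]
Bounding $W_{\pm}$ by its top eigenvalue $\lambda_{3}^{\pm}$ turns the right-hand side into $-\kappa^{+}|\alpha^{+}|^{2}-\kappa^{-}|\alpha^{-}|^{2}$, so this quantity is $\le 0$; Lemma~2 supplies, conversely, $\kappa^{+}+\kappa^{-}\ge 0$. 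To break the deadlock I would use that $\alpha^{+}$ is nowhere zero: a conformal change $\tilde g=\phi^{2}g$ with $\phi$ a positive constant multiple of $|\alpha^{+}|_{g}^{1/2}$ makes $\alpha^{+}$ a $\tilde g$-harmonic self-dual $2$-form of constant $\tilde g$-length (the Hodge star on $2$-forms is conformally invariant, so $\alpha^{+}$ stays self-dual and coclosed), i.e. $(M,\tilde g,\alpha^{+})$ is almost-K\"ahler, and Lemma~1 applied to it gives $\tilde\kappa^{+}\le 0$, which transcribes into a pointwise bound on $\kappa^{+}$ in terms of the conformal factor. Feeding this, the identity above together with its integrated form, and $\kappa^{+}+\kappa^{-}\ge 0$, I expect to be forced to have equality throughout, in particular $\nabla\alpha^{+}=\nabla\alpha^{-}=0$; the remaining assertions $\alpha^{+}\neq 0$, $\alpha^{-}\neq 0$ then follow at once since parallel forms have constant length and neither is identically zero.

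The main obstacle is precisely this last step. Constancy of $|\alpha|^{2}$ only gives $\kappa^{+}|\alpha^{+}|^{2}+\kappa^{-}|\alpha^{-}|^{2}\le 0$, while Lemma~2 only gives $\kappa^{+}+\kappa^{-}\ge 0$, and since $|\alpha^{+}|\neq|\alpha^{-}|$ these cannot be combined naively to pinch the Weitzenb\"ock identity; the whole point of the hypothesis $|\alpha^{+}|\neq|\alpha^{-}|$ is to guarantee that the dominant component is nowhere vanishing, so that the conformal rescaling is legitimate and Lemma~1 can inject the missing one-sided control on $\kappa^{+}$. Getting the bookkeeping of the conformal factor to cancel cleanly against the integrated Bochner inequalities for $\alpha^{+}$ and $\alpha^{-}$ is the delicate part I would expect to spend the most effort on.
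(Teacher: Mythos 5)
Your setup is sound and matches the paper's up to the decisive moment: the reduction to the case $\alpha^{+}\not\equiv 0$, $\alpha^{-}\not\equiv 0$ with (say) $|\alpha^{+}|>|\alpha^{-}|$ everywhere, and the pointwise Weitzenb\"ock consequence $\kappa^{+}|\alpha^{+}|^{2}+\kappa^{-}|\alpha^{-}|^{2}\leq 0$ are exactly what is needed. The gap is your assertion that this inequality and $\kappa^{+}+\kappa^{-}\geq 0$ ``cannot be combined naively'' --- they can, and that single algebraic step is the whole point of the proof. Write
\[
\kappa^{+}|\alpha^{+}|^{2}+\kappa^{-}|\alpha^{-}|^{2}=(\kappa^{+}+\kappa^{-})|\alpha^{+}|^{2}-\kappa^{-}\left(|\alpha^{+}|^{2}-|\alpha^{-}|^{2}\right)\leq 0 .
\]
Since $(\kappa^{+}+\kappa^{-})|\alpha^{+}|^{2}\geq 0$ and $|\alpha^{+}|^{2}-|\alpha^{-}|^{2}>0$, this forces $\kappa^{-}\geq 0$ pointwise, i.e.\ $\frac{s}{3}-2W_{-}\geq 0$ on anti-self-dual forms; the integrated Weitzenb\"ock formula for $\alpha^{-}$ then gives $\nabla\alpha^{-}=0$. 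As $\alpha^{-}$ is now a nonzero parallel anti-self-dual form of constant length, Lemma~1 with the orientation reversed gives $\kappa^{-}\leq 0$, hence $\kappa^{-}=0$, hence $\kappa^{+}=\kappa^{+}+\kappa^{-}\geq 0$, and the Weitzenb\"ock formula for $\alpha^{+}$ gives $\nabla\alpha^{+}=0$. This is precisely how the paper concludes.

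The conformal-rescaling detour you propose instead is not only unnecessary but unlikely to close. Nonnegative biorthogonal curvature is not a conformally invariant condition, so Lemma~2 is lost for $\tilde g$; and while the eigenvalues of $W_{+}$ scale cleanly under $\tilde g=\phi^{2}g$, the scalar curvature picks up second-derivative terms of $\phi$ (a power of $|\alpha^{+}|$), so the conclusion $\tilde\kappa^{+}\leq 0$ does not transcribe into a usable pointwise bound on $\kappa^{+}$ for the original metric. You correctly located where the difficulty sits, but the resolution is the elementary rearrangement above, not an analytic one.
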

\begin{proof}
We note that $\alpha^{\pm}$ are harmonic. 
From the Weitzenb\"ock formula, we get 
\[\frac{s}{3}|\alpha|^{2}-2W(\alpha, \alpha)\leq 0.\]
From this, we get 
\[\kappa^{+}|\alpha^{+}|^{2}+\kappa^{-}|\alpha^{-}|^{2}\leq 0.\]
Suppose $|\alpha^{+}|>|\alpha^{-}|$. Then from, 
\[(\kappa^{+}+\kappa^{-})|\alpha^{-}|^{2}-\kappa^{-}(|\alpha^{+}|^{2}-|\alpha^{-}|^{2})=\kappa^{+}|\alpha^{+}|^{2}+\kappa^{-}|\alpha^{-}|^{2}\leq 0,\]
we get $\kappa^{-}\geq 0$. Thus, from the Weitenb\"ock formula for an anti-self-dual 2-form, we get $\nabla\alpha^{-}=0$. 
This gives $\kappa^{-}\leq 0$. 
Thus, we get $\kappa^{-}=0$. From $\kappa^{+}+\kappa^{-}\geq 0$, we get $\kappa^{+}\geq 0$. 
Then we have $\nabla\alpha^{+}=0$. When we have $|\alpha^{+}|<|\alpha^{-}|$, we get the same result. 

\end{proof}
Below we consider the isotropic curvature following [20]. 
Let $(M, g)$ be a smooth oriented riemannian four-manifold. 
Let $V$ be a complex linear subspace such that $V\in T_{p}M\otimes\mathbb{C}$. 
We denote $(, )$ a complex bilinear form on $T_{p}M\otimes\mathbb{C}$, which is induced by the riemannian metric. 
By $\ll, \gg$, we denote a Hermitian inner product on $T_{p}M\otimes\mathbb{C}$ induced by the riemannian metric.
Then, we have $\ll z, w\gg=(z, \overline{w})$ for $z, w\in T_{p}M\otimes\mathbb{C}$.
By definition, $V$ is totally isotropic if $(z, z)=0$ for $z\in V$. 

The curvature operator can be extended to a complex linear map 
\[\mathfrak{R}: \Lambda^{2}T_{p}M\otimes\mathbb{C}\to \Lambda^{2}T_{p}M\otimes\mathbb{C}.\]
A complex sectional curvature $\mathbb{K}(\sigma)$ for a two plane $\sigma \in T_{p}M\otimes\mathbb{C}$
is defined by 
\[\mathbb{K}(\sigma)=\frac{\ll \mathfrak{R}(z\wedge w), z\wedge w\gg}{||z\wedge w||^{2}}.\]
A riemannian manifold $(M, g)$ has nonnegative isotropic curvature if $\mathbb{K}(\sigma)\geq 0$
where $\sigma \in T_{p}M\otimes\mathbb{C}$ is a totally isotropic two-plane.

Let $\{e_{1}, e_{2}, e_{3}, e_{4}\}$ be a positively oriented orthonormal basis. 
Then for $z=e_{1}+ie_{2}, w=e_{3}+ie_{4}$, we have $(z, z)=(w, w)=0$. 
Then we have 
\[\ll \mathfrak{R}(z\wedge w), z\wedge w\gg=\left<\mathfrak{R}(e_{1}\wedge e_{3}+e_{4}\wedge e_{2}), e_{1}\wedge e_{3}+e_{4}\wedge e_{2}\right>\]
\[+\left<\mathfrak{R}(e_{1}\wedge e_{4}+e_{2}\wedge e_{3}), e_{1}\wedge e_{4}+e_{2}\wedge e_{3}\right>..\]
We note that $\phi_{1}^{+}:=e_{1}\wedge e_{2}+e_{3}\wedge e_{4}$,  $\phi_{2}^{+}=e_{1}\wedge e_{3}+e_{4}\wedge e_{2}$,  
 $\phi_{3}^{+}=e_{1}\wedge e_{4}+e_{2}\wedge e_{3}$ is an orthogonal basis of $\Lambda^{+}M$ with length $\sqrt{2}$. 
 We use the same notation for dual 1-forms of $e_{i}$. 
Using the curvature operator, 

\[ 
\mathfrak{R}=
\LARGE
\begin{pmatrix}

 \begin{array}{c|c}
\scriptscriptstyle{W_{+}\hspace{5pt}\scriptstyle{+}\hspace{5pt}\frac{s}{12}}I& \scriptscriptstyle{ric_{0}}\\
 \hline
 
 \hspace{10pt}
 
 \scriptscriptstyle{ric_{0}^{*}}& \scriptscriptstyle{W_{-}\hspace{5pt}\scriptstyle{+}\hspace{5pt}\frac{s}{12}}I\\
 \end{array}
 \end{pmatrix}
 \]
 we get
 \[\ll \mathfrak{R}(z\wedge w), z\wedge w\gg=\left(W_{+}+\frac{s}{12}I\right)(\phi_{2}^{+}, \phi_{2}^{+})+\left(W_{+}+\frac{s}{12}I\right)(\phi_{3}^{+}, \phi_{3}^{+}).\]
Since $W_{+}$ is trace-free, we have 
\[W_{+}(\phi_{1}^{+}, \phi_{1}^{+})+W_{+}(\phi_{2}^{+}, \phi_{2}^{+}) +W_{+}(\phi_{3}^{+}, \phi_{3}^{+})=0.\] 
Thus, we get 
 \[\ll \mathfrak{R}(z\wedge w), z\wedge w\gg=\left(\frac{s}{6}I-W_{+}\right)(\phi_{1}^{+}, \phi_{1}^{+}).\] 
 We note that for a non-trivial self-dual 2-form $\phi$, there exists a positively oriented basis 
 $\{e_{1}, e_{2}, e_{3}, e_{4}\}$ of $T_{p}M$ such that $\phi_{p}=c(e_{1}\wedge e_{2}+e_{3}\wedge e_{4})$ for $c>0$. 
 Thus, we have $\frac{s}{6}-W_{+}\geq 0$ if $(M, g)$ has nonnegative isotropic curvature. 
 
 We note that when $\{e_{1}, e_{2}, e_{3}, e_{4}\}$ is a positively oriented basis, 
 $\{-e_{1}, e_{2}, e_{3}, e_{4}\}$ is negatively oriented. 
 Let $z=-e_{1}+ie_{2}$, $w=e_{3}+ie_{4}$. 
 Then $(z, z)=(w, w)=0$. 
 Thus, we get 
 \[\ll \mathfrak{R}(z\wedge w), z\wedge w\gg=\left(\frac{s}{6}-W_{-}\right)(\phi_{1}^{-}, \phi_{1}^{-}),\] 
 where $\phi_{1}^{-}=e_{1}\wedge e_{2}-e_{3}\wedge e_{4}$. 
 For an anti-self-dual 2-form $\phi$, there exists a positively oriented basis $\{e_{1}, e_{2}, e_{3}, e_{4}\}$
 such that $\phi_{p}=c(e_{1}\wedge e_{2}-e_{3}\wedge e_{4})$ for $c>0$. 
 Thus, we get $\frac{s}{6}-W_{-}\geq 0$ if $(M, g)$ has nonnegative isotropic curvature.

\begin{Proposition}
Let $(M, g, \omega, J)$ be a compact almost-K\"ahler four manifold with nonnegative isotropic curvature.
Then 
\begin{itemize}
\item $(M, J)$ is biholomorphic to $\mathbb{CP}_{2}$; or
\item $(M, g, \omega)$ is locally flat K\"ahler; or 
\item $(M, g, \omega)$ is K\"ahler and $(M, g)$ is locally a product metric of 2-dimensional riemannian manifolds. 
\end{itemize}

\end{Proposition}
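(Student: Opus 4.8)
The plan is to show first that the almost-K\"ahler structure is in fact K\"ahler, and then to classify by an argument parallel to---but shorter than---that of Theorem 3. The only input needed from the computation preceding the statement is that in dimension four nonnegative isotropic curvature is equivalent to the two operator inequalities $\frac{s}{6}I-W_{+}\ge 0$ and $\frac{s}{6}I-W_{-}\ge 0$; in particular $s\ge 0$, since $W_{\pm}$ are trace-free. Note that this pair of conditions, unlike $K^{\perp}\ge 0$, is symmetric under orientation reversal, so it holds simultaneously for $(M,g)$ and for $(\overline{M},g)$, which is what makes the argument easier than that of Theorem 3.

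The key step is that $\omega$ is parallel. Indeed, $\omega$ is a self-dual harmonic $2$-form of constant length $\sqrt{2}$, so the Weitzenb\"ock identity used in Lemma 1 gives $|\nabla\omega|^{2}=2W_{+}(\omega,\omega)-\frac{s}{3}|\omega|^{2}$; since $\frac{s}{6}I-W_{+}\ge 0$ forces $W_{+}(\omega,\omega)\le\frac{s}{6}|\omega|^{2}$, we obtain $|\nabla\omega|^{2}\le\frac{s}{3}|\omega|^{2}-\frac{s}{3}|\omega|^{2}=0$, hence $\nabla\omega=0$. Therefore $(M,g,\omega,J)$ is K\"ahler; in particular $J$ is integrable and $(M,J)$ is a compact complex surface.

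To classify: if $s\equiv 0$ then the two inequalities force $W_{+}=W_{-}=0$, so $(M,g)$ is a conformally flat K\"ahler surface and Theorem 1 yields the second or third alternative. If $s\not\equiv 0$ then by [2] $M$ carries a metric of positive scalar curvature, so by [18], [21] the complex surface $(M,J)$ is rational or ruled. If $b_{-}=0$, then $b_{2}(M)=b_{+}(M)=1$ (the latter since $M$ is rational, so $p_{g}=0$), and a rational or ruled surface with $b_{2}=1$ is $\mathbb{CP}_{2}$, so $(M,J)$ is biholomorphic to $\mathbb{CP}_{2}$. If $b_{-}\ne 0$, there is a nontrivial anti-self-dual harmonic $2$-form $\alpha$; the Weitzenb\"ock identity together with $\frac{s}{6}I-W_{-}\ge 0$ forces $\nabla\alpha=0$, so $\omega$ and $\alpha$ are parallel $2$-forms of opposite type. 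The parallel symmetric involution they determine on $TM$ has two rank-two eigendistributions, so by the de Rham decomposition theorem, exactly as in the proof of Theorem 3, $(M,g)$ is locally a product of two $2$-dimensional Riemannian manifolds, while remaining K\"ahler.

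I do not anticipate a serious obstacle: once $\nabla\omega=0$ is in hand, the remainder is a streamlined replay of the harmonic-$2$-form bookkeeping of Theorem 3, and it is in fact easier, because $\frac{s}{6}I-W_{+}\ge 0$ is available from the outset (in Theorem 3 it was produced only at the end, by passing to $\overline{M}$ and invoking Lemma 3). The single point that genuinely requires the preliminary K\"ahler step is the conclusion that $(M,J)$ is \emph{biholomorphic} to $\mathbb{CP}_{2}$, not merely diffeomorphic to it: this is legitimate precisely because $J$ has been shown integrable, after which one invokes the classification of rational surfaces, or equivalently Yau's theorem that a compact complex surface homeomorphic to $\mathbb{CP}_{2}$ is biholomorphic to it.
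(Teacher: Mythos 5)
Your proposal is correct and follows essentially the same route as the paper: derive $\frac{s}{6}-W_{\pm}\geq 0$ from nonnegative isotropic curvature, use the Weitzenb\"ock formula to conclude $\nabla\omega=0$ (hence K\"ahler), and then split into the cases $s\equiv 0$ (conformally flat, Tanno) versus $s\not\equiv 0$ (positive scalar curvature metric, rational or ruled, with either $b_{-}=0$ giving $\mathbb{CP}_{2}$ via [25] or a parallel anti-self-dual form giving the de Rham splitting). Your write-up simply supplies more of the details that the paper's terse proof leaves implicit.
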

\begin{proof}
We have $\frac{s}{6}-W_{+}\geq0$ and $\frac{s}{6}-W_{-}\geq0$.
Note that $\omega$ is a self-dual harmonic 2-form of length $\sqrt{2}$. 
From the Weitzenb\"ock formula, 
\[0=\int_{M}\left(|\nabla\omega|^{2}-2W_{+}(\omega, \omega)+\frac{s}{3}|\omega|^{2}\right)d\mu,\]

we get $\nabla\omega=0$. Thus, $(M, g, \omega, J)$ is K\"ahler. 
When $b_{-}\geq 1$, there is a parallel anti-self-dual 2-form. 
We have either $M$ admits a metric of positive scalar curvature, which implies $b_{\pm}=1$ or $s=W_{\pm}=0$. 
We note that if a complex surface $(M, J)$ is diffeomorphic to $\mathbb{CP}_{2}$, 
then $(M, J)$ is biholomorphic to $\mathbb{CP}_{2}$ [25]. 

\end{proof}

\vspace{50pt}

\section{\large\textbf{Almost-K\"ahler four manifolds with nonnegative biorthogonal curvature and constant scalar curvature}}\label{S:Introduction}

We consider Gray's Theorem [8] in compact almost-K\"ahler four manifolds with $J$-invariant Ricci tensor. 

\begin{Theorem}
(Gray) Let $(M, g, \omega)$ be a compact K\"ahler manifold with nonnegative sectional curvature. 
Suppose $M$ has constant scalar curvature. 
Then $(M, g)$ is locally symmetric. 
If $(M, g, \omega)$ has positive sectional curvature, then $(M, g, \omega)$ is $\mathbb{CP}_{n}$ with the Funibi-Study metric. 

\end{Theorem}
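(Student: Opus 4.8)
The statement is Gray's [8]; here is the line of argument one would follow. The plan is to prove $(M,g)$ is locally symmetric by establishing $\nabla R\equiv 0$, and then to identify the positively curved case from the classification of symmetric spaces.

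First I would exploit the constancy of the scalar curvature $s$. Since $(M,g,\omega)$ is K\"ahler, the Ricci form $\rho$ is closed (it represents a multiple of $c_{1}(M)$), and it is coclosed because its codifferential is proportional to $J\,ds=0$; hence $\rho$ is a harmonic $(1,1)$-form. The K\"ahler identities together with $ds=0$ also force $\nabla\rho$ (equivalently $\nabla\mathrm{Ric}$) to be divergence-free and curl-free. I would then apply the Weitzenb\"ock formula for $2$-forms to $\rho$, which for a harmonic form reads
\[0=\int_{M}\left(|\nabla\rho|^{2}+\langle\mathcal{R}(\rho),\rho\rangle\right)d\mu,\]
where $\mathcal{R}$ is the Weitzenb\"ock curvature operator on $2$-forms. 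Because $K\ge 0$ on a K\"ahler manifold implies that the holomorphic bisectional curvature is nonnegative (Goldberg--Kobayashi: it is a sum of real sectional curvatures), a standard algebraic computation gives $\langle\mathcal{R}(\rho),\rho\rangle\ge 0$ pointwise for any $(1,1)$-form; therefore $\nabla\rho\equiv 0$ and the Ricci tensor is parallel.

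Next I would upgrade $\nabla\mathrm{Ric}\equiv 0$ to $\nabla R\equiv 0$. By the de Rham decomposition the universal cover splits isometrically into simply connected K\"ahler--Einstein factors, each still with $K\ge 0$. A compact Ricci-flat factor with $K\ge 0$ is flat: at each point $\mathrm{Ric}(v,v)=\sum_{i}K(v,e_{i})=0$ is a sum of nonnegative sectional curvatures, so all of them vanish, and a flat factor is trivially locally symmetric. For a factor with positive Einstein constant I would use a second Bochner-type identity, now for the full curvature tensor: with $s$ constant and $\mathrm{Ric}$ parallel, the second Bianchi identity lets one integrate $|\nabla R|^{2}$ by parts so that, after the divergence terms are discarded, what remains is the integral of a curvature-quadratic expression whose sign is controlled by $K\ge 0$, forcing $\nabla R\equiv 0$. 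Hence $(M,g)$ is locally symmetric. (In real dimension four this also follows directly from Theorem 3 of the present paper: $K\ge 0$ implies $K^{\perp}\ge 0$, and the constancy of $s$ makes each surface factor of constant Gauss curvature, so every possibility there --- $\mathbb{CP}_{2}$ with the Fubini--Study metric, locally flat, or a local product of constant-curvature surfaces --- is locally symmetric.)

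Finally, suppose $K>0$. Then $\pi_{1}(M)$ is finite by Bonnet--Myers, so the universal cover is a compact, simply connected Riemannian symmetric space of positive sectional curvature, hence irreducible of rank one, i.e. one of $S^{n}$, $\mathbb{CP}^{n}$, $\mathbb{HP}^{n}$, $\mathbb{OP}^{2}$. In the relevant (even) dimensions only $\mathbb{CP}^{n}$ (with $S^{2}=\mathbb{CP}^{1}$) carries a compatible K\"ahler structure, since the others have vanishing second real cohomology and so admit no closed $2$-form representing a nonzero class; its symmetric metrics of positive curvature are the Fubini--Study metric up to scaling, and since holomorphic isometries of $\mathbb{CP}^{n}$ form the connected group $PU(n+1)$ the deck group is trivial. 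Thus $(M,g,\omega)$ is $\mathbb{CP}^{n}$ with the Fubini--Study metric. The step I expect to be the main obstacle is passing from $\nabla\mathrm{Ric}\equiv 0$ to $\nabla R\equiv 0$ on the positive-Einstein factors: nonnegative sectional curvature is an algebraically weak hypothesis, so making the relevant curvature-quadratic integrand sign-definite takes careful bookkeeping with the K\"ahler identities and with the constancy of $s$.
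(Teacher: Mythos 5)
The paper does not actually prove this statement: it is quoted as Gray's theorem with the citation [8], so there is no in-paper argument to compare yours against, and I can only assess your reconstruction on its own terms. Your first stage is correct and is indeed how the argument begins: constant $s$ makes the Ricci form $\rho$ closed and coclosed, hence harmonic, and in a unitary frame diagonalizing $\rho$ with eigenvalues $\lambda_{\alpha}$ the Weitzenb\"ock curvature term becomes $\sum_{\alpha<\beta}R_{\alpha\bar{\alpha}\beta\bar{\beta}}(\lambda_{\alpha}-\lambda_{\beta})^{2}$, which is nonnegative because $K\geq 0$ forces nonnegative holomorphic bisectional curvature; so $\nabla \mathrm{Ric}=0$. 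The endgame is also fine: Bonnet--Myers, the rank-one symmetric space classification, $H^{2}\neq 0$ singling out $\mathbb{CP}_{n}$, and the fact that every element of $PU(n+1)$ fixes a point (an eigenvector argument, not connectedness, is the right reason) to kill the deck group.

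The genuine gap is exactly where you flag it: promoting $\nabla\mathrm{Ric}=0$ to $\nabla R=0$ on the positive Einstein factors. You assert that integrating $|\nabla R|^{2}$ by parts via the second Bianchi identity leaves a curvature-quadratic expression ``whose sign is controlled by $K\geq 0$,'' but no such generic formula exists: the Lichnerowicz identity for the full curvature tensor of a metric with harmonic curvature has a quadratic term that is not sign-definite under $K\geq 0$ alone (for a statement of that shape one needs something like nonnegativity of the curvature operator). The actual content of Gray's paper is precisely the derivation, using the K\"ahler symmetries $R(JX,JY,Z,W)=R(X,Y,Z,W)$ and an adapted unitary frame, of specific integral formulas in which that quadratic term is rewritten as a sum of products of sectional curvatures of explicit $2$-planes, each term visibly nonnegative when $K\geq 0$. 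Without exhibiting such a formula the central step is asserted rather than proved. Your parenthetical four-dimensional fallback via Theorem 3 of this paper does not close the gap either: the first alternative there is only ``diffeomorphic to $\mathbb{CP}_{2}$,'' which carries no metric information and in particular does not yield local symmetry.
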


\begin{Theorem}
(Dr$\breve{a}$ghici) Let $(M, g, \omega)$ be a compact, almost-K\"ahler four manifold with $J$-invariant Ricci tensor. 
Let $\rho(X, Y)=Ric(JX, Y)$ be the corresponding 2-form.
Then $\rho$ is closed. 
\end{Theorem}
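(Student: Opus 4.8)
The plan is to reduce the statement to a single pointwise identity relating $\nabla\omega$ and the Ricci tensor, and then to kill that identity by invariant theory. First I would put $\rho$ in algebraic normal form. Since the Ricci tensor is $J$-invariant, $\rho$ is of type $(1,1)$, hence a section of $\mathbb{R}\omega\oplus\Lambda^{-}$, using that on an almost-Hermitian $4$-manifold the anti-self-dual forms are exactly the primitive $(1,1)$-forms while $\Lambda^{+}=\mathbb{R}\omega\oplus(\Lambda^{2,0}\oplus\Lambda^{0,2})_{\mathbb{R}}$. Because $\{Je_{i}\}$ is again orthonormal one gets $\langle\rho,\omega\rangle=\tfrac12 s$ (with $|\omega|^{2}=2$), so $\rho^{+}=\tfrac{s}{4}\omega$ and $\rho^{-}=\psi:=\rho-\tfrac{s}{4}\omega$ is a section of $\Lambda^{-}$.

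Next I would exploit Hodge theory on the oriented $4$-manifold. For any $2$-form $\beta$ one has $d\beta=\ast\,\delta(\ast\beta)$, and $\ast\rho=\rho^{+}-\rho^{-}$, so $d\rho=\ast(\delta\rho^{+}-\delta\rho^{-})=\ast\big(\tfrac12\delta(s\omega)-\delta\rho\big)$. Writing $\rho(X,Y)=g(JrX,Y)$ with $r$ the Ricci endomorphism, the hypothesis says $rJ=Jr$, hence $(\delta\rho)^{\sharp}=-\sum_{i}(\nabla_{e_{i}}J)(re_{i})-J\,\mathrm{div}(r)$. The contracted second Bianchi identity gives $\mathrm{div}(r)=\tfrac12\nabla s$, and $\delta\omega=0$ (since $\omega$ is closed and self-dual) gives $(\delta(s\omega))^{\sharp}=-J\nabla s$. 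Substituting, the $J\nabla s$ terms cancel and
\[ d\rho=\ast\Big(\textstyle\sum_{i}(\nabla_{e_{i}}J)(re_{i})\Big)^{\flat}. \]
So the theorem is equivalent to the pointwise identity $\sum_{i}(\nabla_{e_{i}}J)(re_{i})=0$.

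To prove this identity I would split $r=\tfrac{s}{4}\mathrm{Id}+r_{0}$ with $r_{0}$ trace-free. The scalar part contributes $\tfrac{s}{4}\sum_{i}(\nabla_{e_{i}}J)e_{i}$, which vanishes because this sum is the metric dual of $-\delta\omega=0$. For the $r_{0}$-part one brings in the almost-Kähler condition. On any almost-Hermitian $4$-manifold, differentiating $\omega=\omega(J\cdot,J\cdot)$ shows $\nabla_{X}\omega$ has type $(2,0)\oplus(0,2)$ for every $X$, so $\nabla\omega$ is a section of $\Lambda^{1}\otimes(\Lambda^{2,0}\oplus\Lambda^{0,2})$, which in dimension four splits into the Lee component $\mathcal{W}_{4}\cong\Lambda^{1}$ and the almost-Kähler component $\mathcal{W}_{2}$; being almost-Kähler is precisely $\mathcal{W}_{4}=0$, so $\nabla\omega$ is a section of $\mathcal{W}_{2}$. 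Since $r_{0}$ is symmetric and commutes with $J$, it corresponds to a primitive $(1,1)$-form, i.e. a section of $\Lambda^{-}$, and $(\nabla\omega,r_{0})\mapsto\sum_{i}(\nabla_{e_{i}}J)(r_{0}e_{i})$ is a $U(2)$-equivariant bilinear map $\mathcal{W}_{2}\otimes\Lambda^{-}\to\Lambda^{1}$. Comparing $U(1)$-weights ($\mathcal{W}_{2}$ has weights $\pm3$, $\Lambda^{-}$ has weight $0$, $\Lambda^{1}$ has weights $\pm1$) there is no nonzero such map, so the contraction vanishes and $d\rho=0$. (Equivalently one can verify this by hand in a $J$-adapted orthonormal frame, where $d\omega=\delta\omega=0$ pin $\nabla\omega$ down to the family allowed by $\mathcal{W}_{2}$ and $r_{0}$ has its own explicit form.)

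I expect the last step to be the main obstacle: one has to see that the residual contraction of $\nabla\omega$ against the Ricci endomorphism has nowhere to be nonzero. This is exactly the place where both hypotheses are spent — the almost-Kähler condition removes the Lee ($\mathcal{W}_{4}$) contribution, and the $J$-invariance of $\mathrm{Ric}$ confines $r_{0}$ to the $\Lambda^{-}$ summand. Dropping either one leaves a genuine term ($\delta$ of the Lee form in the first case, a contraction against the $(2,0)\oplus(0,2)$-part of $\mathrm{Ric}$ in the second) whose $U(1)$-weights now match $\Lambda^{1}$, and $\rho$ need not be closed.
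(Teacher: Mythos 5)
Your proof is correct, but the mechanism you use to kill the essential term is genuinely different from the paper's. Both arguments share the same skeleton: decompose $\rho=\rho_0+\frac{s}{4}\omega$ with $\rho_0$ anti-self-dual, invoke the contracted second Bianchi identity $\mathrm{div}(\mathrm{Ric}-\frac{s}{2}g)=0$, and pass between $d$ and $\delta$ via the Hodge star acting by $\pm1$ on the self-dual and anti-self-dual pieces. The paper, following Dr$\breve{a}$ghici, establishes the codifferential identity $(\delta\rho)(X)=\mathrm{div}\,\mathrm{Ric}(JX)$ by a direct computation in a $J$-adapted frame whose engine is the pointwise relation $(\nabla_{JX}J)JY=-(\nabla_XJ)Y$, itself derived from the Kobayashi--Nomizu formula $2\langle(\nabla_XJ)Y,Z\rangle=\langle N(Y,Z),JX\rangle$ valid precisely in the almost-K\"ahler case; the troublesome $\mathrm{Ric}\cdot\nabla J$ terms are cancelled in pairs $(e_i,Je_i)$. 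You instead isolate the obstruction as the single contraction $\sum_i(\nabla_{e_i}J)(r_0e_i)$ and annihilate it by Schur's lemma: the intrinsic torsion of an almost-K\"ahler four-manifold lies in the Gray--Hervella component $\mathcal{W}_2$, of central $U(1)$-weight $\pm3$, while $r_0$ (equivalently the primitive $(1,1)$-form $\rho_0$) has weight $0$ and the target $\Lambda^1$ has weight $\pm1$, so no nonzero equivariant contraction exists. The two cancellations are the same fact in different clothing, but your version is more conceptual: it makes transparent exactly where each hypothesis is spent (the almost-K\"ahler condition removes the weight-$\pm1$ Lee component $\mathcal{W}_4$, and $J$-invariance of $\mathrm{Ric}$ confines $r_0$ to weight $0$), at the cost of importing the $U(2)$-decomposition of the torsion space; the paper's computation is more elementary and self-contained, though it leaves some of the frame bookkeeping to the reader.
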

\begin{proof}
We follow the proof given in [4], [5]. 
Let $\delta$ be the adjoint to $d$ so that 
\[\int_{M}\left<\delta \omega_{1}, \omega_{2}\right>d\mu=\int_{M}\left<\omega_{1}, d\omega_{2}\right>d\mu, \]
where $\omega_{1}$ is a 2-form and $\omega_{2}$ is a 1-form. 
Then $\delta=*d*$ and 
\[(\delta\alpha)(X)=-(\nabla_{E_{i}}\alpha)(E_{i}, X), \]
where $X\in T_{p}M$ and $\alpha$ is a 2-form [22]. 
Let $S$ be a $(0, 2)$-tensor. 
Then $div S$ is a $(0, 1)$-tensor defined by 
\[(divS)(X)=(\nabla_{E_{i}}S)(E_{i}, X).\]
We note that $div(Ric-\frac{s}{2}g)=0$ by the Second Bianchi Identity. We show 
\[(\delta\rho)(X)=divRic(JX).\]
It is enough to check using a basis $\{e_{1}, Je_{1}, e_{3}, Je_{3}\}$. 

The Nijenhuis tensor is given by 
\[N(X, Y)=[JX, JY]-[X, Y]-J[JX, Y]-J[X, JY].\]
Then by propsition 4.2 in Ch. IX [13], we have 
\[2\left<(\nabla_{X}J)Y, Z\right>=\left(N(Y, Z), JX\right>.\]
Since $N(J\cdot, \cdot)=-JN(\cdot, \cdot)$, we have 
\[2\left<(\nabla_{JX}J)JY, Z\right>=\left<N(JY, Z), -X\right>=-\left<JN(Y, Z), -X\right>=-\left(N(Y, Z), JX\right>\]
From this, we get 
\[(\nabla_{JX}J)JY=-(\nabla_{X}J)Y.\]
Using this formula and $J$-invariance of Ricci tensor, we get 
\[-(\nabla_{e_{i}}\rho)(e_{i}, X)-(\nabla_{Je_{i}}\rho)(Je_{i}, X)=(\nabla_{e_{i}}Ric)(e_{i}, JX)+(\nabla_{Je_{i}}Ric)(Je_{i}, JX),\]
for $i=1, 3$. Similarly, we can check $(\delta(\frac{s}{2}\omega))(X)=(div(\frac{s}{2}g))(JX)$. 
We note that 
\[\rho(JX, JY)=Ric(-X, JY)=\rho(X, Y).\]
Thus, $\rho$ is a $(1, 1)$-form. Since $\Lambda_{0}^{1, 1}$, which are $(1, 1)$-forms orthogonal to $\omega$,
is the set of anti-self-dual 2-forms, we get 
$\rho-\frac{\left<\rho, \omega\right>}{||\omega||^{2}}\omega$ is an anti-self-dual 2-form. We have
\[\left<\rho, \omega\right>=Ric(Je_{1}, Je_{1})+Ric(Je_{3}, Je_{3}).\]
Since $Ric$ is $J$-invariant, we have $Ric(Je_{1}, Je_{1})=Ric(e_{1}, e_{1})$ and $Ric(Je_{3}, Je_{3})=Ric(e_{3}, e_{3})$. 
Thus, we have $\left<\rho, \omega\right>=\frac{1}{2}s$ and 
\[\rho-\frac{\left<\rho, \omega\right>}{||\omega||^{2}}\omega=\rho-\frac{s}{4}\omega.\]
Since $\delta(\rho-\frac{s}{2}\omega)=0$, we get
\[0=*\delta\left(\rho-\frac{s}{2}\omega\right)=*^{2}d*\left(\rho-\frac{s}{2}\omega\right)=-d*\left(\rho-\frac{s}{2}\omega\right).\]
Note that $\rho-\frac{s}{4}\omega$ is an anti-self-dual 2-form and $\omega$ is a self-dual 2-form. 
Then we get 
\[0=d*\left(\rho-\frac{s}{4}\omega-\frac{s}{4}\omega\right)=d\left(-\rho+\frac{s}{4}\omega\right)-d\left(\frac{s}{4}\omega\right)=-d\rho.\]

\end{proof}

\begin{Theorem}
Let $(M, g, \omega)$ be a compact almost-K\"ahler four manifold with $J$-invariant Ricci tensor. 
Suppose the scalar curvature is constant and $b_{-}=0$. 
Then $(M, g, \omega)$ is self-dual K\"ahler-Einstein and $b_{1}=0$. 
\end{Theorem}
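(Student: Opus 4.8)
The plan is to establish, in order: that $(M,g)$ is Einstein, that the almost-K\"ahler structure is actually K\"ahler, and finally that $b_{1}=0$ and $W_{-}=0$.

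\emph{Step 1 (Einstein).} Set $\beta:=\rho-\tfrac{s}{4}\omega$, where $\rho(X,Y)=Ric(JX,Y)$ is the Ricci form. By Theorem 5, $d\rho=0$, and since $s$ is constant and $d\omega=0$ we get $d\beta=0$. The computation in the proof of Theorem 5 identifies $\rho-\tfrac{\langle\rho,\omega\rangle}{\|\omega\|^{2}}\omega=\rho-\tfrac{s}{4}\omega$ as a primitive $(1,1)$-form, hence anti-self-dual; thus $*\beta=-\beta$ and $\delta\beta=-{*}d{*}\beta={*}d\beta=0$, so $\beta$ is a harmonic anti-self-dual $2$-form. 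As $b_{-}=0$ this forces $\beta\equiv 0$, i.e.\ $\rho=\tfrac{s}{4}\omega$, i.e.\ $Ric(JX,Y)=\tfrac{s}{4}g(JX,Y)$ for all $X,Y$; since $J$ is invertible, $Ric=\tfrac{s}{4}g$. Because $M$ is symplectic we have $b_{+}\ge 1$ (as $[\omega]^{2}=\int_{M}|\omega|^{2}d\mu>0$), so combined with $b_{-}=0$ and the Hitchin--Thorpe inequality $2\chi\ge 3|\tau|$ for Einstein $4$-manifolds, applied with $\chi=2-2b_{1}+b_{+}$ and $\tau=b_{+}$, we already get $b_{1}=0$.

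\emph{Step 2 (K\"ahler).} This is the crux. I would invoke Sekigawa's theorem: a compact almost-K\"ahler Einstein $4$-manifold with $s\ge 0$ is K\"ahler. The hypothesis $s\ge 0$ is available from Lemma 2 under the running assumption $K^{\perp}\ge 0$ of this section; applying Sekigawa's theorem to the metric produced in Step 1 yields $\nabla\omega=0$, so $(M,g,\omega)$ is K\"ahler--Einstein. (The $s<0$ case of the Goldberg conjecture being open, the nonnegativity of $s$ is the single genuine external input; alternatively, under $K^{\perp}\ge 0$ one may instead apply Theorem 3 to conclude directly that $M$ is diffeomorphic to $\mathbb{CP}_{2}$, since the locally flat and locally-product alternatives there both have $\tau=0$ and so are incompatible with $b_{-}=0$.)

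\emph{Step 3 (conclusion).} For a K\"ahler surface $|W_{+}|^{2}=\tfrac{s^{2}}{24}$, and $ric_{0}=0$ by Step 1. Feeding these into
\[2\chi-3\tau=\frac{1}{4\pi^{2}}\int_{M}\Big(\frac{s^{2}}{24}+2|W_{-}|^{2}-\frac{|ric_{0}|^{2}}{2}\Big)d\mu\]
together with its orientation reversal $2\chi+3\tau=\frac{1}{4\pi^{2}}\int_{M}\big(\tfrac{s^{2}}{24}+2|W_{+}|^{2}\big)d\mu=\frac{1}{32\pi^{2}}\int_{M}s^{2}\,d\mu$, and eliminating $\int_{M}s^{2}$, gives $\int_{M}|W_{-}|^{2}=\tfrac{8\pi^{2}}{3}(\chi-3\tau)\ge 0$. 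Hence $\chi\ge 3\tau$; with $b_{1}=0$ and $b_{-}=0$ this reads $2+b_{+}\ge 3b_{+}$, so $b_{+}=1$, whence $\chi=3\tau$, $\int_{M}|W_{-}|^{2}=0$ and $W_{-}=0$. Therefore $(M,g,\omega)$ is self-dual K\"ahler--Einstein with $b_{1}=0$. The only real obstacle is Step 2, promoting the Einstein condition to the K\"ahler condition, where I see no way around Sekigawa's theorem and hence around the nonnegativity of the scalar curvature.
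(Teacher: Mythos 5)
Your Step 1 coincides with the paper's argument: $\rho_{0}=\rho-\frac{s}{4}\omega$ is anti-self-dual, and closed by Theorem 5 together with the constancy of $s$, hence harmonic; $b_{-}=0$ then forces $\rho_{0}=0$ and $Ric=\frac{s}{4}g$. The genuine gap is in Step 2. Theorem 6 carries no curvature-positivity hypothesis: despite the section heading, neither $K^{\perp}\geq 0$ nor $s\geq 0$ appears in the statement (indeed Corollary 4 and Proposition 4 are the places where such hypotheses are added and Theorem 6 is then invoked). So the ``running assumption'' you use to secure $s\geq 0$ is not available, Sekigawa's theorem cannot be applied when the constant scalar curvature is negative, and your argument proves the theorem only under an extra hypothesis. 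Your fallback --- applying Theorem 3 to get $M$ diffeomorphic to $\mathbb{CP}_{2}$ --- again presupposes $K^{\perp}\geq 0$, and in any case a diffeomorphism type says nothing about $\nabla\omega=0$, so it does not substitute for the K\"ahler step.

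The paper closes this step by a mechanism that needs no sign on $s$: an oriented Einstein four-manifold has $\delta W_{+}=0$, and for a compact almost-K\"ahler four-manifold with $\delta W_{+}=0$ one has $\int_{M}\frac{s^{2}}{24}\,d\mu\geq\int_{M}|W_{+}|^{2}\,d\mu$, with equality if and only if the structure is K\"ahler (cited from [12], [16]). Feeding this into the Gauss--Bonnet/signature combination gives $\chi-3\tau\geq 0$, which with $b_{-}=0$ and $b_{+}\geq 1$ forces $b_{1}=b_{3}=0$ and $b_{+}=1$, hence $\chi=3\tau$; the resulting equality simultaneously yields $\nabla\omega=0$ and $W_{-}=0$. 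Your Step 3 is a correct variant of this endgame once the metric is known to be K\"ahler, and your Hitchin--Thorpe derivation of $b_{1}=0$ is fine; but as written the proof does not cover the case of negative constant scalar curvature, which the theorem's hypotheses permit, so Step 2 is a real gap rather than a cosmetic difference.
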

\begin{proof}
When $s$ is constant, we note that $\rho_{0}=\rho-\frac{s}{4}\omega$ is an anti-self-dual closed 2-form. 
Since $\delta=*d*$ on 2-forms on compact four manifolds, we have 
\[\delta\left(\rho-\frac{s}{4}\omega\right)=*d*\left(\rho-\frac{s}{4}\omega\right)=-*d\left(\rho-\frac{s}{4}\omega\right)=0.\]
Thus, $\rho_{0}=\rho-\frac{s}{4}\omega$ is an anti-self-dual harmonic 2-form. 
Since $b_{-}=0$, we get $\rho_{0}=0$. 
Using $J$-invariance of Ricci tensor, we can check $\rho_{0}=0$ implies $(M, g)$ is Einstein. 
A compact almost-K\"ahler four manifold with $\delta W_{+}=0$ has 
\[\int_{M}\frac{s^{2}}{24}d\mu\geq \int_{M}|W_{+}|^{2}d\mu,\]
with equality if and only if $(M, g, \omega)$ is K\"ahler [12], [16]. Oriented, Einstein four manifolds have $\delta W_{+}=0$. 
Then for a compact almost-K\"ahler Einstein manifold, we have 
\[\chi-3\tau=\frac{1}{8\pi^{2}}\int_{M}\left(\frac{s^{2}}{24}-|W_{+}|^{2}+3|W_{-}|^{2}-\frac{|ric_{0}|^{2}}{2}\right)d\mu\geq0.\]
Since $\chi=2-b_{1}+b_{+}-b_{3}$ and $\tau=b_{+}$, we get 
\[\chi-3\tau=2-b_{1}+b_{+}-b_{3}-3b_{+}.\]
Since $b_{+}\geq 1$, we have $b_{1}=b_{3}=0$ and $b_{+}=1$. 
Thus, we have $\chi=3\tau$. 
Then we have $\int_{M}\frac{s^{2}}{24}d\mu= \int_{M}|W_{+}|^{2}d\mu$ and $W_{-}=0$. 
Thus, $(M, g, \omega)$ is self-dual K\"ahler-Einstein. 
\end{proof}

\begin{Corollary}
Let $(M, g, \omega)$ be a compact, almost-K\"ahler four manifold with $J$-invariant Ricci tensor. 
Suppose $(M, g)$ has positive biorthogonal curvature and constant scalar curvature. 
Then $(M, g, \omega)$ is $\mathbb{CP}_{2}$ with the Fubini-Study metric up to rescaling. 
\end{Corollary}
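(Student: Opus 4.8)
The plan is to chain the diffeomorphism classification of Theorem 3 with Theorem 6 and then invoke Gray's theorem (Theorem 4). First I would note that positive biorthogonal curvature is in particular nonnegative biorthogonal curvature, so by [11] $M$ is diffeomorphic to $\mathbb{CP}_{2}$ — this is also consistent with Theorem 3, since strict positivity of $K^{\perp}$ rules out the locally flat alternative (where $K^{\perp}\equiv 0$) and the locally-product alternative (where a plane $P$ spanned by one tangent direction from each factor has $P$ and $P^{\perp}$ both "mixed", so $K(P)=K(P^{\perp})=0$ and $K^{\perp}(P)=0$). In particular $b_{-}(M)=0$. The hypotheses of Theorem 6 are now met, so $(M,g,\omega)$ is self-dual Kähler--Einstein with $b_{1}=0$.

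Next I would fix the sign of the scalar curvature. By Lemma 2 we have $s\geq 4K^{\perp}(P)$ for every two-plane $P$, and since $K^{\perp}>0$ everywhere on the compact manifold $M$, this forces $s>0$ at each point; as $s$ is constant by hypothesis, $s$ is a positive constant. (Equivalently, a scalar-flat Kähler--Einstein metric would be Ricci-flat Kähler, forcing $c_{1}=0$, impossible on $\mathbb{CP}_{2}$.)

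The heart of the argument is to upgrade "self-dual Kähler--Einstein with $s>0$" to strictly positive sectional curvature, so that the rigidity part of Gray's theorem applies. Being Einstein, $ric_{0}=0$, so the curvature operator $\mathfrak{R}$ preserves the splitting $\Lambda^{2}=\Lambda^{+}\oplus\Lambda^{-}$; being self-dual, $W_{-}=0$, so $\mathfrak{R}|_{\Lambda^{-}}=\frac{s}{12}I$; and being Kähler, $W_{+}$ has eigenvalues $\frac{s}{6},-\frac{s}{12},-\frac{s}{12}$, so $\mathfrak{R}|_{\Lambda^{+}}=W_{+}+\frac{s}{12}I$ is positive semidefinite, with eigenvalues $\frac{s}{4},0,0$. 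Given a unit two-plane $P=\mathrm{span}(e_{1},e_{2})$ with $\{e_{1},e_{2},e_{3},e_{4}\}$ a positively oriented orthonormal basis, the self-dual and anti-self-dual parts of $e_{1}\wedge e_{2}$ are $\frac{1}{2}(e_{1}\wedge e_{2}\pm e_{3}\wedge e_{4})$, each of squared norm $\frac{1}{2}$, so
\[ K(P)=\langle\mathfrak{R}(e_{1}\wedge e_{2})^{+},(e_{1}\wedge e_{2})^{+}\rangle+\langle\mathfrak{R}(e_{1}\wedge e_{2})^{-},(e_{1}\wedge e_{2})^{-}\rangle\geq 0+\frac{s}{12}\cdot\frac{1}{2}=\frac{s}{24}>0. \]
Hence $(M,g)$ has positive sectional curvature, and since $(M,g,\omega)$ is compact Kähler, Theorem 4 (Gray) gives that $(M,g,\omega)$ is $\mathbb{CP}_{2}$ with the Fubini--Study metric up to rescaling, as claimed.

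I expect the only genuine content beyond bookkeeping to be this last curvature-operator computation. The points to watch are that strict positivity of $K^{\perp}$ really excludes the flat and locally-product alternatives of Theorem 3, so that one lands exactly in the $\mathbb{CP}_{2}$ case and hence within the scope of Theorem 6, and that $s$ is shown to be strictly positive, so that Gray's theorem delivers Fubini--Study rigidity rather than merely the "locally symmetric" conclusion.
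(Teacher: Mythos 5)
Your proof is correct, and it lands in the same place as the paper --- reduce to the hypotheses of Theorem 6 by establishing $b_{-}=0$ --- but it gets there and finishes differently. The paper obtains $b_{-}=0$ purely analytically: positive biorthogonal curvature sharpens Lemma 3 to the strict inequality $\frac{s}{6}-W_{-}>0$ (and $s>0$), so the Weitzenb\"ock formula $\Delta\alpha=\nabla^{*}\nabla\alpha-2W_{-}(\alpha,\cdot)+\frac{s}{3}\alpha$ admits no nonzero anti-self-dual harmonic $2$-form. You instead route through the diffeomorphism classification (Theorem 3 / reference [11]), which is valid --- your elimination of the flat and locally-product alternatives via a mixed plane with $K(P)=K(P^{\perp})=0$ is sound --- but it imports the Seiberg--Witten machinery behind [18], [21], whereas the paper's two-line Weitzenb\"ock argument is self-contained and softer. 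On the back end you add something the paper leaves implicit: after Theorem 6 yields a self-dual K\"ahler--Einstein metric with $s>0$, the paper simply stops, tacitly appealing to the known rigidity of such metrics, while you make this explicit by diagonalizing $\mathfrak{R}$ (using $ric_{0}=0$, $W_{-}=0$, and the K\"ahler spectrum $\frac{s}{6},-\frac{s}{12},-\frac{s}{12}$ of $W_{+}$) to get $K(P)\geq\frac{s}{24}>0$ and then invoking Gray's theorem. That computation is correct (the cross terms vanish because $ric_{0}=0$, and $|(e_{1}\wedge e_{2})^{\pm}|^{2}=\frac{1}{2}$), and it is arguably a cleaner, fully self-contained way to close the argument than the paper's unstated appeal.
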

\begin{proof}
We note that $\frac{s}{6}-W_{-}>0$ and $s>0$. 
Then from a Weitzenb\"ock formula for an anti-self-dual 2-form,
\[\Delta\alpha=\nabla^{*}\nabla\alpha-2W_{-}(\alpha, \cdot)+\frac{s}{3}\alpha,\]
we get $b_{-}=0$. 
\end{proof}

\begin{Proposition}
Let $(M, g, \omega)$ is a compact almost-K\"ahler four manifold with $J$-invariant Ricci tensor. 
Suppose $(M, g)$ has nonnegative biorthogonal curvature and the scalar curvature is constant. 
Then either $(M, g)$ is Einstein, or $(M, g, \omega)$ is K\"ahler with parallel Ricci form. 
\end{Proposition}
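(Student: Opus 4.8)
The plan is to apply Theorem 3 directly to $(M,g,\omega)$ and then dispose of each of the three alternatives it produces, invoking the extra hypotheses (constant scalar curvature and $J$-invariant Ricci tensor) that do not appear in Theorem 3. Since $(M,g,\omega)$ is a compact almost-K\"ahler four manifold with nonnegative biorthogonal curvature, Theorem 3 yields one of: (i) $M$ is diffeomorphic to $\mathbb{CP}_{2}$; (ii) $(M,g,\omega)$ is locally flat K\"ahler; (iii) $(M,g,\omega)$ is K\"ahler and $(M,g)$ is locally a product of 2-dimensional riemannian manifolds.

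Case (ii) is immediate, since a flat metric is Ricci-flat, hence Einstein. In case (i), I would first note that $\omega$ is a non-trivial self-dual harmonic 2-form of constant length $\sqrt{2}$, so $b_{+}(M)\geq 1$; together with $b_{2}(M)=1$ this forces $b_{-}(M)=0$. All hypotheses of Theorem 6 are then in force --- $J$-invariant Ricci tensor, constant scalar curvature, $b_{-}=0$ --- so $(M,g,\omega)$ is self-dual K\"ahler-Einstein, in particular Einstein. (Alternatively one may repeat the short argument from the proof of Theorem 6: by Dr\u{a}ghici's theorem the form $\rho(X,Y)=Ric(JX,Y)$ is closed, so $\rho_{0}:=\rho-\frac{s}{4}\omega$ is a closed anti-self-dual 2-form, hence harmonic on the compact four manifold; since $b_{-}(M)=0$ it vanishes, and $\rho=\frac{s}{4}\omega$ together with $\omega(X,Y)=g(JX,Y)$ gives $Ric=\frac{s}{4}g$.)

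In case (iii), I would pass to the universal cover $\widetilde{M}$, which is complete and simply connected, and apply the de Rham decomposition theorem to write $(\widetilde{M},\widetilde{g})$ as a riemannian product $(X_{1},h_{1})\times(X_{2},h_{2})$ with $\dim X_{i}=2$. The scalar curvature of a riemannian product is the sum of the scalar curvatures of the factors, so $\widetilde{s}=2K_{1}+2K_{2}$, where $K_{i}$ is the Gauss curvature of $(X_{i},h_{i})$. Since $s$, and hence $\widetilde{s}$, is constant, $K_{1}(p_{1})+K_{2}(p_{2})$ is independent of $(p_{1},p_{2})$, which forces $K_{1}$ and $K_{2}$ to be individually constant. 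Therefore each $(X_{i},h_{i})$ is a surface of constant curvature, with parallel Ricci tensor, so $Ric$ is parallel on $\widetilde{M}$ and descends to a parallel Ricci tensor on $M$. Since $(M,g,\omega)$ is K\"ahler we have $\nabla J=0$, hence the Ricci form $\rho=Ric(J\,\cdot\,,\,\cdot\,)$ is parallel as well. Thus $(M,g,\omega)$ is K\"ahler with parallel Ricci form, and the dichotomy holds in every case.

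Once Theorem 3 is available the proof is essentially bookkeeping, so I do not expect a serious obstacle. The one point deserving care is case (iii): Theorem 3 only asserts that $(M,g)$ is \emph{locally a product of 2-dimensional riemannian manifolds}, with a priori arbitrary factors, and it is exactly the constancy of the scalar curvature that upgrades this to constant-curvature --- hence Ricci-parallel --- factors. A minor secondary point is the identification $b_{-}(M)=0$ in case (i), which as noted follows from $b_{+}(M)\geq 1$ and $b_{2}(M)=1$.
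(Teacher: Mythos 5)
Your argument is correct, but it takes a genuinely different route from the paper. The paper proves the proposition directly and without invoking the classification: by Dr\u{a}ghici's theorem (Theorem 5) the Ricci form $\rho$ is closed, so for constant $s$ the anti-self-dual part $\rho_{0}=\rho-\frac{s}{4}\omega$ is an anti-self-dual \emph{harmonic} $2$-form; the inequality $\frac{s}{6}-W_{-}\geq 0$ of Corollary 1 and the Weitzenb\"ock formula then force $\nabla\rho_{0}=0$. Either $\rho_{0}=0$, i.e.\ $(M,g)$ is Einstein, or $\rho_{0}$ is a nonzero parallel anti-self-dual form, so $(\overline{M},g,\rho_{0})$ is K\"ahler with $K^{\perp}\geq 0$; applying Corollary 1 to the reversed orientation gives $\frac{s}{6}-W_{+}\geq 0$, hence $\nabla\omega=0$ and $\nabla\rho=\nabla(\rho_{0}+\frac{s}{4}\omega)=0$. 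You instead route everything through Theorem 3 and Theorem 6 and then do a case analysis; this works (your identification $b_{-}=0$ in the $\mathbb{CP}_{2}$ case and your separation-of-variables argument showing that constant scalar curvature forces each de Rham factor to have constant Gauss curvature are both sound), and it even yields slightly sharper conclusions in each branch (self-dual K\"ahler--Einstein in the $\mathbb{CP}_{2}$ case, constant-curvature factors in the product case). The trade-off is that your proof leans on the full strength of Theorem 3, which rests on Seiberg--Witten-theoretic input about rational and ruled surfaces, whereas the paper's argument is a short, self-contained Weitzenb\"ock computation that produces the parallel Ricci form directly.
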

\begin{proof}
When $s$ is constant, we note that $\rho_{0}=\rho-\frac{s}{4}\omega$ is an anti-self-dual harmonic 2-form. 
Since $\frac{s}{6}-W_{-}\geq 0$, from the Weitzenb\"ock formula for an anti-self-dual 2-form, 
we get $\nabla\rho_{0}=0$.
Thus, if $(M, g, \omega)$ is not Einstein, we get a parallel anti-self-dual 2-form $\rho_{0}$. 
Then $(\overline{M}, g, \rho_{0})$ is a K\"ahler surface with $K^{\perp}\geq0$. Then by Corollary 1, we get $\nabla\omega=0$. 
Then $\nabla\rho=\nabla\left(\rho_{0}-\frac{s}{4}\omega\right)=0$.
\end{proof}

\begin{Proposition}
Let $(M, g, \omega)$ be a compact almost-K\"ahler four-manifold with nonnegative biorthogonal curvature. 
Suppose the scalar curvature is constant. Then either $M$ is diffeomorphic to $\mathbb{CP}_{2}$ or
$(M, g, \omega)$ is K\"ahler and locally symmetric. 
\end{Proposition}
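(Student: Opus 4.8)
The plan is to run through the trichotomy already supplied by Theorem 3. That theorem asserts that a compact almost-K\"ahler four-manifold with nonnegative biorthogonal curvature is either (a) diffeomorphic to $\mathbb{CP}_{2}$, or (b) locally flat K\"ahler, or (c) K\"ahler with $(M,g)$ locally a Riemannian product of two $2$-dimensional manifolds. In case (a) we are already in the first alternative of the statement, so nothing more is needed. In case (b) a locally flat metric has $\nabla R\equiv 0$, hence $(M,g)$ is locally symmetric, and it is K\"ahler by hypothesis, so we are in the second alternative. Thus the entire content of the argument lies in case (c).

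In case (c), I would fix a point $p$ together with a product neighbourhood $(U_{1},g_{1})\times(U_{2},g_{2})$ of $p$, each $U_{i}$ a surface, and let $K_{i}$ denote the Gauss curvature of $(U_{i},g_{i})$, regarded as a function on the product that depends only on the $U_{i}$-variable. The scalar curvature of a Riemannian product is additive, so $s=2K_{1}+2K_{2}$; since $s$ is constant by hypothesis, $K_{1}+K_{2}$ is constant on $U_{1}\times U_{2}$. Freezing the $U_{2}$-variable forces $K_{1}$ to be constant on $U_{1}$, and then $K_{2}$ is constant on $U_{2}$. Hence each factor is a surface of constant Gauss curvature, so each is locally symmetric, so the product is locally symmetric; since such a neighbourhood exists around every point, $(M,g)$ is locally symmetric, and it is K\"ahler. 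An equivalent route avoiding this computation is to invoke Proposition 3: a K\"ahler metric has $J$-invariant Ricci tensor, so $(M,g)$ is Einstein or has parallel Ricci form, and in the product description either possibility immediately forces $K_{1}$ and $K_{2}$ to be constant.

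Combining the three cases, either $M$ is diffeomorphic to $\mathbb{CP}_{2}$ or $(M,g,\omega)$ is K\"ahler and locally symmetric, which is exactly the claimed dichotomy. No step is a genuine obstacle once Theorem 3 is available; the only point deserving care is that the third bullet of Theorem 3 really produces an honest local Riemannian splitting into surfaces — which it does, being itself obtained from the de Rham decomposition theorem — so that the separation-of-variables argument for the scalar curvature, or equivalently the appeal to Proposition 3, is legitimate.
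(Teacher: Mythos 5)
Your argument is correct, and it reaches the conclusion by a genuinely different route from the paper's. The paper does not invoke Theorem 3 as a black box; it re-runs the underlying machinery (positive constant scalar curvature forces $M$ to be rational or ruled; if $M$ is not diffeomorphic to $\mathbb{CP}_{2}$ then $b_{-}\neq 0$, and the Weitzenb\"ock formula together with $\frac{s}{6}-W_{-}\geq 0$ yields a parallel anti-self-dual form $\alpha$, whence $\omega$ is parallel too), and then deduces local symmetry by showing every piece of the curvature tensor is parallel: $W_{\pm}$ because their eigenvalues are determined by the constant $s$ and their eigenspaces by the parallel forms $\omega$ and $\alpha$, and the trace-free Ricci part because $\rho_{0}=\rho-\frac{s}{4}\omega$ is parallel (here Proposition 4 applies, a K\"ahler metric having $J$-invariant Ricci tensor); the $s\equiv 0$ case is sent to Theorem 2. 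You instead take the trichotomy of Theorem 3 at face value and, in the product case, observe that $s=2K_{1}+2K_{2}$ with each $K_{i}$ depending only on its own factor, so constancy of $s$ separates variables and forces each Gauss curvature to be constant; this is more elementary, and it makes the role of the constant-scalar-curvature hypothesis completely transparent, at the cost of leaning on the honesty of the local de Rham splitting asserted in Theorem 3 (which, as you note, is exactly how that theorem is proved). Two small remarks: your alternative route cites ``Proposition 3'' where you mean Proposition 4 (the $J$-invariant-Ricci statement), and that detour is unnecessary given your separation-of-variables argument; otherwise the proof stands.
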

\begin{proof}
We note that the scalar curvature is nonnegative. 
Suppose the scalar curvature is positive constant. 
Then $M$ is diffeomrophic to a rational or ruled surface [18], [21]. 
If $M$ is not diffeomorphic to $\mathbb{CP}_{2}$, then $b_{-}\neq 0$. 
Thus, there is a nontrivial anti-self-dual harmonic 2-form $\alpha$. 
Since $\frac{s}{6}-W_{-}\geq 0$, $\alpha$ is parallel. 
Therefore, $(\overline{M}, g, \alpha)$ is K\"ahler. 
Then, $(\overline{M}, g, \alpha)$ is K\"ahler with nonnegative biorthogonal curvature. 
Thus, we get $\frac{s}{6}-W_{+}\geq 0$, which implies $\omega$ is parallel. 
Since $(M, g, \omega)$ and $(\overline{M}, g, \alpha)$ are K\"ahler, we get $W_{\pm}$ is constant since $s$ is constant. 
Since $(M, g, \omega)$ is K\"ahler, $\nabla\rho=0$ if $(M, g, \omega)$ is not Einstein by Proposition 4. 
Thus, $(M, g, \omega)$ is locally symmetric. 
Suppose $s=0$. 
Then from $\frac{s}{6}-W_{-}\geq0$, we get $W_{-}=0$. 
Thus, $(M, g, \omega)$ is self-dual. The the result follows from Theorem 2. 

\end{proof}

\begin{Corollary}
Let $(M, g, \omega)$ be a compact almost-K\"ahler four manifold with $J$-invariant Ricci tensor. 
Suppose $(M, g)$ has nonnegative biorthogonal curvature and constant scalar curvature. 
Then $(M, g, \omega)$ is K\"ahler and locally symmetric. 
\end{Corollary}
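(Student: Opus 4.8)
The plan is to obtain this corollary almost entirely from Proposition 5, using the extra hypothesis that $Ric$ is $J$-invariant only to dispose of the single case Proposition 5 leaves open. First I would apply Proposition 5: since $(M,g,\omega)$ is a compact almost-K\"ahler four-manifold with nonnegative biorthogonal curvature and constant scalar curvature, either $(M,g,\omega)$ is already K\"ahler and locally symmetric, in which case there is nothing more to prove, or $M$ is diffeomorphic to $\mathbb{CP}_{2}$. So the whole content is concentrated in this second alternative.

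Assume then $M\cong\mathbb{CP}_{2}$, so that $b_{-}(M)=0$. Now the $J$-invariance enters: with $Ric$ being $J$-invariant, $s$ constant, and $b_{-}=0$, Theorem 6 applies and gives that $(M,g,\omega)$ is self-dual K\"ahler--Einstein (and $b_{1}=0$). In particular $(M,g,\omega)$ is K\"ahler, which is half of what we want. Moreover $s$ cannot vanish: if $s\equiv 0$ the metric would be Ricci-flat K\"ahler, forcing $c_{1}=0$, which contradicts $c_{1}^{2}(\mathbb{CP}_{2})=9$; since $s$ is a nonnegative constant by Lemma 2, it is a positive constant. It remains to show the metric is locally symmetric.

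To finish I would note that for a K\"ahler metric $W_{+}$ has eigenvalues $\{\frac{s}{6},-\frac{s}{12},-\frac{s}{12}\}$ (with $\omega/\sqrt 2$ the eigenvector of $\frac{s}{6}$), while self-duality gives $W_{-}=0$, hence $\lambda_{1}^{+}=-\frac{s}{12}$ and $\lambda_{1}^{-}=0$. Plugging these into $2K_{1}^{\perp}=\frac{s}{6}+\lambda_{1}^{+}+\lambda_{1}^{-}$ gives $K_{1}^{\perp}=\frac{s}{24}>0$, so $(M,g)$ in fact has \emph{positive} biorthogonal curvature. Since $Ric$ is $J$-invariant (it is Einstein) and $s$ is constant, Corollary 4 then identifies $(M,g,\omega)$ with $\mathbb{CP}_{2}$ equipped with a rescaling of the Fubini--Study metric, which is a (globally, hence locally) symmetric K\"ahler manifold. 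Alternatively, one can invoke directly the uniqueness of K\"ahler--Einstein metrics on $\mathbb{CP}_{2}$, or Hitchin's classification of compact self-dual Einstein four-manifolds of positive scalar curvature.

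The main obstacle is exactly this $\mathbb{CP}_{2}$ case: nonnegative biorthogonal curvature alone does not rigidify the metric on $\mathbb{CP}_{2}$, so one genuinely needs the $J$-invariance of the Ricci tensor---which through Theorem 6 upgrades the metric to Einstein and self-dual---together with a uniqueness/rigidity input for K\"ahler--Einstein metrics on $\mathbb{CP}_{2}$. Every other case is handled verbatim by Proposition 5, so no additional argument is required there.
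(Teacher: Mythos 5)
Your proof is correct and takes essentially the same route as the paper, whose entire proof of this corollary is the one-line citation of Theorem 6 and Proposition 5. You merely fill in the details the paper leaves implicit, in particular that in the $\mathbb{CP}_{2}$ case the self-dual K\"ahler--Einstein conclusion of Theorem 6 (with $s>0$) does yield local symmetry.
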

\begin{proof}
The result follows from Theorem 6 and Proposition 5. 
\end{proof}

Blair connection on an almost-K\"ahler four manifold is given by 
\[\tilde{\nabla}_{X}Y=\nabla_{X}Y-\frac{1}{2}J(\nabla_{X}J)Y, \]
where $\nabla$ is Levi-Civita connection. 
Then $\tilde{\nabla}$ induces the connection on the anti-canonical line bundle and its curvature is given in [14].

\begin{Proposition}
Let $(M, g, \omega)$ be a compact almost-K\"ahler four manifold with nonnegative biorthogonal curvature.
Suppose $s^{\tilde{\nabla}}:=\frac{s+s^{*}}{2}$ is constant. Then 
\begin{itemize}
\item $(M, g)$ is locally flat K\"ahler; or
\item $M$ is diffeomorphic to $\mathbb{CP}_{2}$ and 
$(M, g, \omega)$ is an almost-K\"ahler Hermitian-Einstein metric with positive $s^{\tilde{\nabla}}$; or
\item $(M, g, \omega)$ is K\"ahler and locally symmetric.
\end{itemize}
\end{Proposition}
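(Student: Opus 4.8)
The plan is to feed $(M,g,\omega)$ into Theorem 3 and then use the constancy of $s^{\tilde{\nabla}}$ to refine each of the three outcomes. First I would record, via Proposition 1 and Lemma 2, that $s^{\tilde{\nabla}}=\frac{s+s^{*}}{2}=s+\frac{1}{2}|\nabla\omega|^{2}\geq s\geq 0$, so the hypothesis says $s^{\tilde{\nabla}}$ is a nonnegative constant; combined with the formula $4\pi c_{1}\cdot[\omega]=\int_{M}s^{\tilde{\nabla}}\,d\mu$, this gives $c_{1}\cdot[\omega]\geq0$, with equality exactly when $s^{\tilde{\nabla}}\equiv0$.

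Now I would apply Theorem 3. If $(M,g,\omega)$ is locally flat K\"ahler we are in the first alternative and there is nothing more to do. If $(M,g,\omega)$ is K\"ahler with $(M,g)$ locally a product of two surfaces, then $s^{*}=s$, so $s=s^{\tilde{\nabla}}$ is constant; since $\mathbb{CP}_{2}$ is not, even locally, a product of surfaces, Proposition 6 forces $(M,g,\omega)$ to be K\"ahler and locally symmetric, the third alternative. The only remaining case is $M$ diffeomorphic to $\mathbb{CP}_{2}$, which is where the work lies.

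So suppose $M$ is diffeomorphic to $\mathbb{CP}_{2}$. Then $H^{2}(M;\mathbb{R})$ is one-dimensional with positive intersection form; since $[\omega]^{2}=2\,\mathrm{Vol}(M)>0$ we have $[\omega]\neq0$, hence $c_{1}\cdot[\omega]\neq0$ (as $c_{1}$ is a nonzero multiple of the generator), so $\int_{M}s^{\tilde{\nabla}}\,d\mu=4\pi c_{1}\cdot[\omega]\neq0$ and, being a nonnegative constant, $s^{\tilde{\nabla}}>0$. It remains to show $(M,g,\omega)$ is almost-K\"ahler Hermitian--Einstein, i.e.\ that the curvature $2$-form $\rho^{\tilde{\nabla}}$ of the canonical (Blair) connection on the anti-canonical line bundle is proportional to $\omega$. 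I would use [14] to write $\rho^{\tilde{\nabla}}$ as a closed real $(1,1)$-form representing $2\pi c_{1}$ with $\langle\rho^{\tilde{\nabla}},\omega\rangle=\frac{1}{2}s^{\tilde{\nabla}}$ (the Blair connection satisfies $\tilde{\nabla}g=\tilde{\nabla}J=0$, so it is Hermitian and its curvature is of type $(1,1)$). Then $\rho^{\tilde{\nabla}}_{0}:=\rho^{\tilde{\nabla}}-\frac{s^{\tilde{\nabla}}}{4}\omega$ is a $(1,1)$-form orthogonal to $\omega$, hence an anti-self-dual $2$-form; it is closed because $s^{\tilde{\nabla}}$ is constant and $d\omega=0$; and a closed anti-self-dual $2$-form on a compact four-manifold is also coclosed, since $\delta=*d*=-*d$ on anti-self-dual forms. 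Thus $\rho^{\tilde{\nabla}}_{0}$ is a harmonic anti-self-dual $2$-form, and $b_{-}(\mathbb{CP}_{2})=0$ gives $\rho^{\tilde{\nabla}}_{0}=0$, i.e.\ $\rho^{\tilde{\nabla}}=\frac{s^{\tilde{\nabla}}}{4}\omega$ with $s^{\tilde{\nabla}}>0$: the second alternative.

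The expected main obstacle is precisely the input from [14]: that the canonical-connection curvature is a \emph{closed} real $(1,1)$-form whose $\omega$-trace equals $\frac{1}{2}s^{\tilde{\nabla}}$, together with the identification of ``almost-K\"ahler Hermitian--Einstein'' with the pointwise condition $\rho^{\tilde{\nabla}}=\frac{s^{\tilde{\nabla}}}{4}\omega$. Everything else is bookkeeping: Theorem 3 and Proposition 6 supply the topological and geometric classification, Proposition 1 and Lemma 2 give $s^{\tilde{\nabla}}\geq0$, and the vanishing of $\rho^{\tilde{\nabla}}_{0}$ is the standard Hodge-theoretic fact that a closed anti-self-dual $2$-form on a compact four-manifold is harmonic, used here in the case $b_{-}=0$.
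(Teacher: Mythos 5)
Your overall reduction is the paper's: feed the metric into Theorem 3, dispose of the locally flat case, handle the K\"ahler--local-product case by noting $s=s^{*}$ so that $s=s^{\tilde{\nabla}}$ is constant and invoking the constant-scalar-curvature proposition (this is Proposition 5 in the paper, not 6), and isolate $M\cong\mathbb{CP}_{2}$ as the case needing work. Your cohomological argument for $s^{\tilde{\nabla}}>0$ ($b_{2}(\mathbb{CP}_{2})=1$, $c_{1}\neq 0$ since $c_{1}^{2}=9$, $[\omega]\neq 0$, hence $c_{1}\cdot[\omega]\neq 0$, and nonnegativity of the constant $s^{\tilde{\nabla}}$ forces it positive) is a legitimate and arguably cleaner substitute for the paper's appeal to [15].

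The gap is in the $\mathbb{CP}_{2}$ case, at the parenthetical ``the Blair connection satisfies $\tilde{\nabla}g=\tilde{\nabla}J=0$, so it is Hermitian and its curvature is of type $(1,1)$.'' That implication is valid for the Chern connection of a \emph{holomorphic} line bundle, i.e., when $J$ is integrable; for a non-integrable almost-complex structure a Hermitian connection on $K^{-1}$ generally has a nonzero curvature component in $(\Lambda^{2,0}\oplus\Lambda^{0,2})_{\mathbb{R}}$, which on a four-manifold is exactly the orthogonal complement of $\omega$ inside $\Lambda^{+}$. The formula from [14] that the paper itself quotes exhibits this: the self-dual part of $iF_{\tilde{\nabla}}$ involves $W_{+}(\omega)^{\perp}$, the component of $W_{+}(\omega)$ orthogonal to $\omega$, in addition to $\frac{s+s^{*}}{8}\omega$; this term vanishes iff $\omega$ is an eigenvector of $W_{+}$, which is automatic for K\"ahler metrics but not for almost-K\"ahler ones, and is not supplied by $K^{\perp}\geq 0$. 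Consequently your $\rho^{\tilde{\nabla}}_{0}:=\rho^{\tilde{\nabla}}-\frac{s^{\tilde{\nabla}}}{4}\omega$ is not known to be anti-self-dual, and the Hodge argument with $b_{-}=0$ never gets started. The paper's route is slightly different in mechanism --- it argues that $iF_{\tilde{\nabla}}$ is closed with closed self-dual part, hence harmonic, and then uses $b_{2}(\mathbb{CP}_{2})=1$ to force $iF_{\tilde{\nabla}}=c\omega$ --- but it too turns on identifying $(iF_{\tilde{\nabla}})^{+}$ with $\frac{s^{\tilde{\nabla}}}{2}\omega$, i.e., on controlling $W_{+}(\omega)^{\perp}$. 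That is precisely the point your justification elides, so you should either quote the exact statement of [14] and explain why the $W_{+}(\omega)^{\perp}$ term is absent or harmless here, or find an independent argument that $\omega$ is an eigenvector of $W_{+}$ under the hypotheses.
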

\begin{proof}
We use Theorem 3. We note that $s^{\tilde{\nabla}}\geq 0$ since $s\geq 0$. 
Suppose $M$ is diffeomorphic to $\mathbb{CP}_{2}$. 
We note that if $s^{\tilde{\nabla}}=0$, then $c_{1}\cdot\omega=0$. 
On the other hand, it was shown [15] that if $M$ is a smooth manifold which is diffeomorphic to a rational or ruled surface with $2\chi+3\tau\geq 0$, 
then $c_{1}\cdot\omega>0$ for any symplectic form $\omega$ on $M$. 
Thus, $s^{\tilde{\nabla}}$ is positive. 
Since $\alpha^{+}=\frac{\alpha+*\alpha}{2}$, 
$\alpha$ is harmonic if $d\alpha^{+}=0$ and $d\alpha=0$. 
Since $b_{+}(\mathbb{CP}_{2})=1$, from the curvature formula of Blair connection [14], we have
\[(iF_{\tilde{\nabla}})^{+}=\langle W_{+}(\omega)^{\perp}+\left(\frac{s+s^{*}}{8}\right)\omega, \omega\rangle\omega=\frac{s^{\tilde{\nabla}}}{2}\omega.\]
Thus,  $iF_{\tilde{\nabla}}$ is harmonic if $s^{\tilde{\nabla}}$ is constant. 
Since $b_{+}(\mathbb{CP}_{2})=1$, we have $iF_{\tilde{\nabla}}=c\omega$
for a constant $c$. 
Thus, $(M, g, \omega)$ is an almost-K\"ahler Hermitian-Einstein four manifold, for which 
we refer to [17]. 

Suppose $(M, g, \omega)$ is K\"ahler and $M$ is not diffeomorphic to $\mathbb{CP}_{2}$. 
Since this metric is K\"ahler, we have $s=s^{*}$. Thus, $s$ is constant. 
Then the result follows from Proposition 5.

\end{proof}

\vspace{50pt}

\section{\large\textbf{K\"ahler surfaces with nonnegative orthogonal holomorphic bisectional curvature}}\label{S:Intro}
Let $(M, g, \omega)$ be a K\"ahler surface. 
The Holomorphic sectional curvature is defined by 
\[H(X)=R(X\wedge JX, X\wedge JX),\]
for a unit tangent vector $X\in TM$. 
The Holomorphic bisectional curvature is defined by 
\[H(X, Y)=R(X\wedge JX, ,Y\wedge JY), \]
for unit tangent vectors $X, Y\in TM$. 
Using the Second Bianchi Identity and K\"ahler property, we have 
\[H(X, Y)=R(X\wedge Y, X\wedge Y)+R(X\wedge JY, X\wedge JY).\]
The orthogonal holomorphic bisectional curvature is defined by 
\[H(X, Y)=R(X\wedge JX, Y\wedge JY),\]
for unit tangent vectors $X, Y$ such that $X\perp Y$ and $X\perp JY$.

\begin{Proposition}
Let $(M, g, \omega)$ be a compact connected K\"ahler manifold with 
positive orthogonal bisectional holomorphic curvature. 
Let $V$ and $W$ be compact complex submanifolds such that $dim V+dim W\geq dim M$. 
Then $V$ and $W$ have a nonempty intersection. 
\end{Proposition}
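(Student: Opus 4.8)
The plan is to run Frankel's intersection argument, adapted so that only \emph{orthogonal} holomorphic bisectional curvature enters. I would argue by contradiction: assume $V\cap W=\emptyset$. Using compactness of $V$ and $W$, the distance $\ell:=d(V,W)$ is positive and is realized by a smooth unit-speed minimizing geodesic $\gamma\colon[0,\ell]\to M$ from some $p\in V$ to some $q\in W$. The first step is the standard one: the first variation of arc length forces $\gamma$ to be orthogonal to $V$ at $p$ and to $W$ at $q$, and since $V$ and $W$ are complex submanifolds their tangent spaces at $p$ and $q$ are $J$-invariant, so in fact $J\dot\gamma(0)\perp T_pV$ and $J\dot\gamma(\ell)\perp T_qW$ as well.

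Next I would extract a parallel variation field by a dimension count. Write $\tau_t$ for parallel transport along $\gamma$; since $M$ is K\"ahler this is an isometry commuting with $J$ and carrying $\dot\gamma(0)$ to $\dot\gamma(t)$. Both $\tau_\ell(T_pV)$ and $T_qW$ sit inside the hyperplane $\dot\gamma(\ell)^{\perp}$ of $T_qM$, whose real dimension is $\dim_{\mathbb R}M-1$, whereas the hypothesis $\dim V+\dim W\ge\dim M$ gives $\dim_{\mathbb R}\tau_\ell(T_pV)+\dim_{\mathbb R}T_qW\ge\dim_{\mathbb R}M$; hence these subspaces meet in a nonzero vector $w=\tau_\ell(v)$ with $v\in T_pV$. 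Normalizing $|v|=1$ and setting $E(t)=\tau_t(v)$, I get a parallel unit field along $\gamma$ with $E(0)=v\in T_pV$, $E(\ell)=w\in T_qW$; then $JE$ is also parallel with $JE(0)=Jv\in T_pV$ and $JE(\ell)=Jw\in T_qW$, and the orthogonality relations from the first step propagate by parallel transport to give $E\perp\dot\gamma$ and $E\perp J\dot\gamma$ along all of $\gamma$.

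The core of the proof is then the second variation. For each of $E$ and $JE$ I would build a variation of $\gamma$ whose endpoints move along geodesics of $V$ and of $W$ (with initial velocities $v,w$, resp.\ $Jv,Jw$) and whose interior variation field is the parallel field. Because $\gamma$ meets $V$ and $W$ orthogonally the first variations vanish; because the fields are parallel, the index form reduces to $-\int_0^\ell\langle R(E,\dot\gamma)\dot\gamma,E\rangle\,dt$ plus boundary terms built from the second fundamental forms $\mathrm{II}_V$ and $\mathrm{II}_W$. The key algebraic identity, valid for a complex submanifold of a K\"ahler manifold, is $\mathrm{II}(JX,JX)=-\mathrm{II}(X,X)$, which makes the boundary terms of the $E$-variation and the $JE$-variation cancel when added, leaving $L_E''(0)+L_{JE}''(0)=-\int_0^\ell\bigl(\langle R(E,\dot\gamma)\dot\gamma,E\rangle+\langle R(JE,\dot\gamma)\dot\gamma,JE\rangle\bigr)\,dt=-\int_0^\ell H(\dot\gamma,E)\,dt$, where I use the identity $H(X,Y)=R(X\wedge Y,X\wedge Y)+R(X\wedge JY,X\wedge JY)$ recorded above. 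Since $\dot\gamma(t)$ and $E(t)$ are unit with $\dot\gamma\perp E$ and $\dot\gamma\perp JE$, the integrand is exactly an orthogonal holomorphic bisectional curvature and hence strictly positive, so $L_E''(0)+L_{JE}''(0)<0$. On the other hand, every curve in either variation joins $V$ to $W$, so $L_E$ and $L_{JE}$ have a minimum at $s=0$, forcing $L_E''(0)\ge 0$ and $L_{JE}''(0)\ge 0$ — a contradiction, which proves $V\cap W\neq\emptyset$.

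I expect the main obstacle to be the careful handling of the second variation formula with variable endpoints: getting the signs of the second-fundamental-form boundary terms right and checking that they really do cancel under $X\mapsto JX$, for which the identity $\mathrm{II}(JX,JX)=-\mathrm{II}(X,X)$ is indispensable. A secondary point, conceptually important though quick to verify, is the relation $E\perp J\dot\gamma$: it is precisely what places $H(\dot\gamma,E)$ under the orthogonality hypothesis, and it holds only because $T_pV$ is $J$-invariant and $\nabla J=0$ — which is exactly why the weaker assumption of positive \emph{orthogonal} holomorphic bisectional curvature suffices.
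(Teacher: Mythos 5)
Your proposal is correct and takes essentially the same route as the paper: the Frankel--Goldberg--Kobayashi second-variation argument with a parallel field $X$ and its companion $JX$, cancellation of the endpoint terms via $\mathrm{II}(JX,JX)=-\mathrm{II}(X,X)$, and the observation that $T\perp X$ and $T\perp JX$ so that the resulting integrand $R(T\wedge JT, X\wedge JX)$ falls under the \emph{orthogonal} bisectional curvature hypothesis. The only difference is that you spell out the dimension count for the parallel field and the boundary-term cancellation, which the paper simply cites from Frankel's and Goldberg--Kobayashi's work.
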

\begin{proof}
We follow the proof in [6], [7]. 
If $V\cap W=\phi$, let $\alpha(t)$ $0\leq t\leq r$ be a shortest geodesic from $V$ to $W$
with $p=\alpha(0), q=\alpha(r)$. 
Since $dim V +dim W\geq dim M$, there exists a parallel vector field $X$ which is tangent to $V$ at $p$ and $W$ at $q$. 
Since $\nabla J=0$ and $V, W$ are complex submanifolds, we get 
$JX$ is also parallel vector field which is tangent to $V$ at $p$ and $W$ at $q$.
Let $T$ be a vector field tangent to $\alpha(t)$ 
We note that $T\perp X$ and $T\perp JX$. 
Then the second variation of $\alpha(t)$ with respect to $X$ and $JX$ are given as follows. 
\[L_{X}^{''}(0)=g(\nabla_{X}X, T)_{q}-g(\nabla_{X}X, T)_{p}-\int_{0}^{r}R(T\wedge X, T\wedge X)dt\]
\[L_{JX}^{''}(0)=g(\nabla_{JX}JX, T)_{q}-g(\nabla_{JX}JX, T)_{p}-\int_{0}^{r}R(T\wedge JX, T\wedge JX)dt\]
It was shown in [6] 
\[g(\nabla_{X}X, T)_{p}+g(\nabla_{JX}JX, T)_{p}=0.\]
\[g(\nabla_{X}X, T)_{q}+g(\nabla_{JX}JX, T)_{q}=0.\]
Then we have 
\[L_{X}^{''}(0)+L_{JX}^{''}(0)=-\int_{0}^{r}R(T\wedge X, T\wedge X)+R(T\wedge JX, T\wedge JX)dt\]
\[=-\int_{0}^{r}R(T\wedge JT, X\wedge JX)dt<0.\]
We note that positive orthogonal bisectional curvature implies $R(T\wedge JT, X\wedge JX)>0$
since $T\perp X, T\perp JX$. 
Then we get a contradiction since $\alpha$ is a shortest geodesic from $V$ to $W$. 

\end{proof}

\begin{Proposition}
Let $(M, g, \omega)$ be a K\"ahler surface. 
If $(M, g, \omega)$ has nonnegative orthogonal holomorphic bisectional curvature, 
then $\frac{s}{6}-W_{-}\geq 0$. 
\end{Proposition}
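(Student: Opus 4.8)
The plan is to prove the pointwise statement: at each $p\in M$ and for each non-trivial anti-self-dual $2$-form $\phi$ at $p$, one has $\bigl(\tfrac{s}{6}I-W_-\bigr)(\phi,\phi)\ge 0$. The idea is to represent $\phi$ by a $J$-adapted orthonormal frame at $p$ and recognize the resulting curvature expression as a positive multiple of an orthogonal holomorphic bisectional curvature, which is nonnegative by hypothesis.

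First I would use that on a K\"ahler surface $\Lambda^-_p$ is exactly the space of primitive $(1,1)$-forms (as already noted in the text). Given $\phi\in\Lambda^-_p$, the skew-symmetric endomorphism $\hat\phi$ with $g(\hat\phi X,Y)=\phi(X,Y)$ commutes with $J$ because $\phi$ is $J$-invariant, so $A:=J\hat\phi$ is $g$-symmetric and $J$-linear. Its eigenspaces are therefore $J$-invariant, and one can pick an orthonormal eigenbasis of the form $\{e_1,e_2=Je_1,e_3,e_4=Je_3\}$, which is positively oriented for the K\"ahler orientation. Primitivity $\langle\phi,\omega\rangle=0$ forces the two eigenvalues of $A$ to be opposite, hence $\phi=c(e^1\wedge e^2-e^3\wedge e^4)=c\,\phi_1^-$ for some $c\in\mathbb{R}$, where $\phi_1^{\pm}=e^1\wedge e^2\pm e^3\wedge e^4$ and $\phi_1^+=\omega$.

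Next I would set $X=e_1$ and $Y=e_3$; these are unit vectors with $X\perp Y$ and $X\perp JY$, so $H(X,Y)=R(X\wedge JX,Y\wedge JY)$ is an orthogonal holomorphic bisectional curvature. Writing $e^1\wedge e^2=\tfrac12(\phi_1^++\phi_1^-)$ and $e^3\wedge e^4=\tfrac12(\phi_1^+-\phi_1^-)$ and using that $\mathfrak{R}$ is symmetric (so the cross terms $\langle\mathfrak{R}\phi_1^+,\phi_1^-\rangle$ cancel), I obtain $H(X,Y)=\tfrac14\bigl(\langle\mathfrak{R}\phi_1^+,\phi_1^+\rangle-\langle\mathfrak{R}\phi_1^-,\phi_1^-\rangle\bigr)$. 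For a K\"ahler surface $W_+(\omega)=\tfrac{s}{6}\omega$ and $|\omega|^2=|\phi_1^-|^2=2$, so $\langle\mathfrak{R}\phi_1^+,\phi_1^+\rangle=(\tfrac{s}{6}+\tfrac{s}{12})\cdot 2=\tfrac{s}{2}$ while $\langle\mathfrak{R}\phi_1^-,\phi_1^-\rangle=W_-(\phi_1^-,\phi_1^-)+\tfrac{s}{12}\cdot 2$. Hence $4H(X,Y)=\tfrac{s}{3}-W_-(\phi_1^-,\phi_1^-)=\bigl(\tfrac{s}{6}I-W_-\bigr)(\phi_1^-,\phi_1^-)$. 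Nonnegativity of $H$ then gives $\bigl(\tfrac{s}{6}I-W_-\bigr)(\phi_1^-,\phi_1^-)\ge0$, and since $\phi=c\,\phi_1^-$ this yields $\bigl(\tfrac{s}{6}I-W_-\bigr)(\phi,\phi)\ge0$; as $p$ and $\phi$ are arbitrary, $\tfrac{s}{6}-W_-\ge0$. As an alternative to this direct computation one can invoke the identity established just above the statement, $\ll\mathfrak{R}(z\wedge w),z\wedge w\gg=(\tfrac{s}{6}-W_-)(\phi_1^-,\phi_1^-)$ for $z=-e_1+ie_2$, $w=e_3+ie_4$, and identify the left-hand side with $H(e_1,e_3)$ in the case $e_2=Je_1$, $e_4=Je_3$.

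The main obstacle I expect is the frame step: one must be sure the orthonormal basis diagonalizing the primitive $(1,1)$-form $\phi$ can be chosen $J$-adapted and positively oriented, since this is exactly what guarantees that $R(e^1\wedge e^2,e^3\wedge e^4)$ is an \emph{orthogonal holomorphic} bisectional curvature (the coordinate planes must be $\{X,JX\}$ and $\{Y,JY\}$), rather than a generic curvature of a pair of $2$-planes. This rests on the spectral theorem for $g$-symmetric $J$-linear endomorphisms together with the fact that a $J$-adapted orthonormal frame is automatically positively oriented for the K\"ahler orientation. The only other point needing care is the bookkeeping of normalizations, namely $|\omega|^2=|\phi_1^\pm|^2=2$ and the eigenvalue $\tfrac{s}{6}$ of $W_+$ on $\omega$, which fix the constants in the final identity $4H(X,Y)=\bigl(\tfrac{s}{6}I-W_-\bigr)(\phi_1^-,\phi_1^-)$.
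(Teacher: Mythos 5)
Your proof is correct, and it follows the same overall strategy as the paper: put the anti-self-dual form $\phi$ into the normal form $c\,(e^1\wedge e^2-e^3\wedge e^4)$ with respect to a $J$-adapted orthonormal frame, and then identify $\bigl(\tfrac{s}{6}I-W_-\bigr)(\phi,\phi)$ with $4c^2$ times an orthogonal holomorphic bisectional curvature. The two steps are, however, implemented differently. For the normal form, the paper introduces the second almost-complex structure $K$ dual to $\phi$, produces a vector $V$ with $\langle KV,JV\rangle=0$ by an intermediate-value argument, and builds the frame from $E_1=V-KJV$, $E_3=V+KJV$; you instead apply the spectral theorem to the $g$-symmetric, $J$-commuting endomorphism $J\hat\phi$ and use primitivity to force opposite eigenvalues. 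Your route is cleaner and avoids the slightly delicate orientation discussion in the paper, though you should note explicitly that for nonzero $\phi$ the two eigenvalues are distinct so the eigenspaces really are two orthogonal $J$-invariant planes (in the degenerate case $\phi$ would be a multiple of $\omega$, hence zero by primitivity). For the curvature identity, the paper expands $\bigl(\tfrac{s}{6}-W_-\bigr)(\phi,\phi)$ in components and reduces it to $4R_{1234}$ via the K\"ahler symmetries $R_{1313}=R_{1324}$, etc., together with the first Bianchi identity; you instead expand $H(e_1,e_3)=\langle\mathfrak{R}(e^1\wedge e^2),e^3\wedge e^4\rangle$ through the block decomposition of $\mathfrak{R}$, using symmetry to kill the $ric_0$ cross terms and the K\"ahler fact $W_+\omega=\tfrac{s}{6}\omega$ to evaluate the self-dual block. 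Both computations yield the same identity $4H(e_1,e_3)=\bigl(\tfrac{s}{6}I-W_-\bigr)(\phi_1^-,\phi_1^-)$; yours packages the K\"ahler curvature identities into the single statement about $W_+$, which is arguably more transparent, while the paper's componentwise argument is more self-contained.
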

\begin{proof}
We follow the proof given in [1]. 
Let $\phi$ be an anti-self-dual 2-form. 
Then an almost-complex structure is defined by 
\[g(KX, Y)=\frac{\sqrt{2}}{||\phi||}\phi(X, Y).\]
Similarly, $J$ correspond to a self-dual 2-form $\psi$. 
Since $\left<\phi, \psi\right>=0$, we have $\left<K, J\right>=0$.
Then we have 
\[\Sigma_{i=1}^{4}\left<K(e_{i}), J(e_{i})\right>=0.\]
If $\left<K(e_{i}), J(e_{i})\right>\neq 0$ for all $i$, then we can suppose
$\left<K(e_{1}), J(e_{1})\right>>0$ and $\left<K(e_{2}), J(e_{2})\right><0$.
Then there exists $t\in(0, 1)$ such that $\left<K(te_{1}+(1-t)e_{2}), J(te_{1}+(1-t)(e_{2})\right>=0$
Thus, there exists $V\neq 0$ such that $\left<K(V), J(V)\right>=0$. 
We note that from this, we get $\left<V, KJ(V)\right>=0$.
We consider $\{V, K(V), J(V), KJ(V)\}$, which is negatively oriented and $\{V, J(V), K(V), JK(V)\}$, which is positively oriented. 
Then we get $KJ(V)=JK(V)$. 
Then we have
\[J(V-KJV)=JV+KV=K(V-KJV).\]
Since $V\perp KJV$, we get $0\neq E_{1}:=V-KJV$ such that $JE_{1}=KE_{1}$. 
We also get 
\[J(V+KJV)=JV-KV=K(-V-KJV).\]
Thus, we have $0\neq E_{3}:=V+KJV$ such that $JE_{3}=-KE_{3}$. 
Since
$\left<V-KJV, V+KJV\right>=0$, we have $E_{1}\perp E_{3}$. 
Moreover, we have $\left<JE_{1}, E_{3}\right>=\left<JV-JKJV, V+KJV\right>=0$. 
We normalize $E_{1}$ and $E_{3}$ and we use the same notation for the dual 1-forms. 
Then, we get 
\[\phi=E_{1}\wedge KE_{1}+E_{3}\wedge KE_{3}=E_{1}\wedge JE_{1}-E_{3}\wedge JE_{3}.\]
We consider $E_{1}, E_{2}=JE_{1}, E_{3}, E_{4}=JE_{3}$. 
Then we have 
\[\left(\frac{s}{6}-W_{-}\right)(\phi, \phi)=R_{1313}+R_{1414}+R_{2323}+R_{2424}+2R_{1234},\]
where $R_{ijkl}=R(E_{i}\wedge E_{j}, E_{k}\wedge E_{l})$. 
By the curvature identities for K\"ahler manifolds, we have 
\[R_{1313}=R_{1324}, \hspace{5pt} R_{1414}=-R_{1423}, \hspace{5pt}  R_{2323}=-R_{2314}, \hspace{5pt} R_{2424}=R_{2413}.\]
From the First Bianchi Identity, we have 
\[R_{1243}+R_{3142}+R_{2341}=0, \hspace{5pt} R_{2413}+R_{3214}+R_{4312}=0.\]
From this, we get 
\[\left(\frac{s}{6}-W_{-}\right)(\phi, \phi)=R_{1313}+R_{1414}+R_{2323}+R_{2424}+2R_{1234}=4R_{1234}.\]
We note that $R_{1234}=R(E_{1}\wedge J E_{1}, E_{3}\wedge JE_{3})=H(E_{1}, E_{3})\geq 0$ and $E_{1}\perp E_{3}$, $E_{1}\perp JE_{3}$.  
\end{proof}

\begin{Theorem}
Let $(M, g, \omega, J)$ be a compact K\"ahler surface with nonnegative
orthogonal holomorphic bisectional curvature. 
Then 
\begin{itemize}
\item $(M, J)$ is biholomorphic to $\mathbb{CP}_{2}$; or
\item $(M, g)$ is locally flat; or 
\item $(M, g)$ is locally a product space  of 2-dimensional riemannian manifolds.   
\end{itemize}
\end{Theorem}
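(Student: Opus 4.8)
The strategy is to reduce this K\"ahler statement to the almost-K\"ahler results already proved, via Proposition 9. By Proposition 9, nonnegative orthogonal holomorphic bisectional curvature implies $\frac{s}{6}-W_{-}\geq 0$. Since $(M,g,\omega)$ is K\"ahler, the scalar curvature plays the role it played in Theorem 3: first I would argue $s\geq 0$ from the fact that $W_{-}$ is trace-free, and then split into the cases $s\not\equiv 0$ and $s\equiv 0$.

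In the case $s\not\equiv 0$, the surface carries a metric of positive scalar curvature after a conformal change, so by [18], [21] $M$ is diffeomorphic to a rational or ruled surface; if $b_{-}=0$ then $M$ is diffeomorphic to $\mathbb{CP}_{2}$, hence biholomorphic to $\mathbb{CP}_{2}$ by [25] (here the K\"ahler hypothesis lets us upgrade ``diffeomorphic'' to ``biholomorphic''). If $b_{-}\neq 0$, there is a nontrivial anti-self-dual harmonic $2$-form $\alpha$; the Weitzenb\"ock formula together with $\frac{s}{6}-W_{-}\geq 0$ forces $\nabla\alpha=0$, so $(\overline{M},g,\alpha)$ is K\"ahler. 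Then I would check that nonnegative orthogonal holomorphic bisectional curvature for $(M,J)$ also gives control on $(\overline{M},g)$: reversing orientation, the roles of $W_{+}$ and $W_{-}$ swap, and one obtains $\frac{s}{6}-W_{+}\geq 0$ as well, so $\omega$ is also parallel. With both $\omega$ and $\alpha$ parallel and orthogonal, the de Rham decomposition argument from the proof of Theorem 3 (splitting $T_pM$ into the kernels $V_1$ of $\alpha$ and $V_2$ of $\omega$, each parallel and invariant under holonomy) shows $(M,g)$ is locally a product of two surfaces.

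In the case $s\equiv 0$, $\frac{s}{6}-W_{-}\geq 0$ and $W_{-}$ trace-free give $W_{-}=0$, so $(M,g,\omega)$ is self-dual and scalar-flat K\"ahler; Theorem 2 (equivalently Theorem 1 of Tanno) then yields that $(M,g)$ is either locally flat or locally a product of surfaces of constant curvature $K$ and $-K$. Collecting the three cases gives exactly the stated trichotomy.

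\textbf{Main obstacle.} The one place requiring genuine care is the claim that the orthogonal holomorphic bisectional curvature condition survives the orientation reversal well enough to conclude $\frac{s}{6}-W_{+}\geq 0$ on $M$ once we know $(\overline{M},g,\alpha)$ is K\"ahler. The clean way is to observe that $(\overline{M},g,\alpha)$ is a K\"ahler surface whose complex structure is the one associated to $\alpha$, and to verify that its orthogonal holomorphic bisectional curvature is again nonnegative — this is not automatic from the hypothesis on $(M,J)$, so one must instead apply Proposition 9 in the reversed orientation directly, using that $\frac{s}{6}-W_{+}\geq 0$ is precisely what Proposition 9's computation yields for the $\Lambda^{-}$ (in the original orientation) 2-forms. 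Handling this symmetry bookkeeping correctly, and making sure $b_{+}(\overline{M})=1$ so that $M$ is an $S^2$-bundle over a surface, is the crux; everything else is a direct appeal to Theorems 2 and 3 and the Weitzenb\"ock formula.
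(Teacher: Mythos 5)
Your proposal follows essentially the same route as the paper: Proposition 9 gives $\frac{s}{6}-W_{-}\geq 0$, hence $s\geq 0$; when $s\not\equiv 0$ one passes to a positive scalar curvature metric, uses the Weitzenb\"ock formula to produce a parallel anti-self-dual form when $b_{-}\neq 0$, and applies the de Rham splitting; when $s\equiv 0$ one reduces to Tanno's theorem. The only comment worth making is that the ``main obstacle'' you flag is vacuous in this K\"ahler setting: $\nabla\omega=0$ is part of the hypothesis, so no orientation-reversed version of Proposition 9 is needed to conclude that $\omega$ is parallel; equivalently, for a K\"ahler surface $W_{+}$ has spectrum $\left(\frac{s}{6},-\frac{s}{12},-\frac{s}{12}\right)$, so $\frac{s}{6}-W_{+}\geq 0$ is automatic once $s\geq 0$, and your proposed detour through Proposition 9 ``in the reversed orientation'' (whose hypothesis concerns $J$ on $M$, not the complex structure induced by $\alpha$ on $\overline{M}$) can simply be deleted.
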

\begin{proof}
Since $\frac{s}{6}-W_{-}\geq 0$ and $W_{-}$ is trace-free, we have $s\geq 0$. 
Suppose $s$ is not identically zero. Then $M$ admits a positive scalar curvature metric [2]. 
If a compact symplectic four-manifold admits a metric of positive scalar curvature, 
then $M$ is diffeomorphic to $\mathbb{CP}_{2}$ if $b_{-}=0$. 
If $b_{-}\neq 0$, then from the Weitzenb\"ock formula, we get a parallel anti-self-dual 2-form. 
Then we get $(\overline{M}, g)$ is K\"ahler and $M$ admits a positive scalar curvature metric. 
Then $\overline{M}$ is diffeomorphic to a rational or ruled surface. 
Then $b_{+}(\overline{M})=1$. 
Thus, we get $b_{-}(M)=1$. 
Then we use the same argument in Theorem 3. 
We note that if a complex surface $(M, J)$ is diffeomorphic to $\mathbb{CP}_{2}$, 
then $(M, J)$ is biholomorphic to $\mathbb{CP}_{2}$ [25]. 

Suppose $s$ is identically zero. Since $\frac{s}{6}-W_{-}\geq 0$ and $W_{-}$ is trace-free, we have $W_{-}=0$.
For a K\"ahler surface, we have $W_{+}=0$ if and only if $s=0$. 
Thus, we get a conformally flat K\"ahler surface and we can use Theorem 1. 
\end{proof}

\vspace{20pt}

\begin{Corollary}
(Howard-Smyth) 
Let $(M, g, \omega, J)$ be a compact K\"ahler surface with nonnegative holomorphic bisectional curvature. 
Then 
\begin{itemize}
\item $(M, J)$ is biholomorphic to $\mathbb{CP}_{2}$; or
\item $(M, g)$ is locally flat; or
\item $(M, J)$ is biholomorphic to $\mathbb{CP}_{1}\times\mathbb{CP}_{1}$ 
and $(M, g)$ is a product metrics of nonnegative curvature; or 
\item $(M, J)$ is a ruled surface over an elliptic curve 
and $(M, g)$ is locally a product space with a flat metric on $T^{2}$ and a metric of nonnegative curvature on $S^{2}$. 

\end{itemize}
\end{Corollary}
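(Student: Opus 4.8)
The plan is to derive this from Theorem 7 and Corollary 1, upgrading the Riemannian conclusions to statements about the complex structure $J$. To begin, note that nonnegative holomorphic bisectional curvature trivially implies nonnegative orthogonal holomorphic bisectional curvature, since the orthogonal version is the restriction of $H(X,Y)=R(X\wedge JX,Y\wedge JY)$ to pairs of unit vectors with $X\perp Y$ and $X\perp JY$. Hence Theorem 7 applies, and $(M,J)$ is biholomorphic to $\mathbb{CP}_{2}$, or $(M,g)$ is locally flat, or $(M,g)$ is locally a product of two-dimensional Riemannian manifolds. The first two alternatives are exactly the first two bullets, so all the work is in the product case.

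In the product case I would first exploit the full bisectional curvature hypothesis. Taking $Y=X$ shows that the holomorphic sectional curvature $H(X)=R(X\wedge JX,X\wedge JX)$ is nonnegative. Since $\nabla J=0$, the parallel distributions defining the de Rham splitting of the universal cover are $J$-invariant (the restricted holonomy sits inside $SO(2)\times SO(2)$ and $J$ commutes with it, so unless the metric is flat $J$ preserves each factor), and along each factor $\{X,JX\}$ is an orthonormal frame, so $H(X)$ equals the Gauss curvature of that factor; thus each factor has nonnegative curvature. As the curvature of a Riemannian product is built from the curvatures of its factors, $(M,g)$ then has $K\geq 0$ everywhere, and Corollary 1 applies. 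Since $(M,g)$ is locally a product, the $\mathbb{CP}_{2}$ alternative of Corollary 1 cannot occur, so either $(M,g)$ is locally flat, which is the second bullet, or the universal cover is a Riemannian product $S^{2}\times S^{2}$ with metrics of nonnegative curvature on each factor, or it is a product $S^{2}\times\mathbb{R}^{2}$ with the flat metric on $\mathbb{R}^{2}$ and a metric of nonnegative curvature on $S^{2}$.

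It remains to pin down the biholomorphism type in the two product cases. In the first, each de Rham factor carries the parallel restriction of $J$, hence is a compact Riemann surface diffeomorphic to $S^{2}$, i.e. a copy of $\mathbb{CP}_{1}$, and $J=J_{1}\oplus J_{2}$ shows $\widetilde{M}\cong\mathbb{CP}_{1}\times\mathbb{CP}_{1}$ as a complex manifold; then $M=\widetilde{M}/\Gamma$ with $\Gamma\cong\pi_{1}(M)$ finite (both $M$ and $\widetilde{M}$ being compact) acting freely by biholomorphisms. But every automorphism of $\mathbb{CP}_{1}\times\mathbb{CP}_{1}$ has a fixed point: an element of $\mathrm{PGL}_{2}(\mathbb{C})\times\mathrm{PGL}_{2}(\mathbb{C})$ fixes a pair of individual fixed points, and a swap-type automorphism $(x,y)\mapsto(Ay,Bx)$ fixes $(Aq,q)$ where $q$ is any fixed point of $BA$. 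Hence $\Gamma$ is trivial, $M\cong\mathbb{CP}_{1}\times\mathbb{CP}_{1}$, and the metric is a product of metrics of nonnegative curvature, which is the third bullet. In the second case $M$ is an $S^{2}$-bundle over the compact flat surface $\mathbb{R}^{2}/\overline{\Gamma}$; since $M$ is K\"ahler, $b_{1}(M)$ is even, forcing this base to be an elliptic curve rather than a Klein bottle, so $(M,J)$ is a ruled surface over an elliptic curve with a locally product metric as described, which is the fourth bullet.

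The main obstacle is the last paragraph: one must check carefully that the de Rham factors of the universal cover inherit the complex structure (so that ``compact Riemann surface of genus $0$'' is meaningful and the splitting $\widetilde{M}\cong\mathbb{CP}_{1}\times\mathbb{CP}_{1}$ is holomorphic), and must rule out free actions of nontrivial finite groups on $\mathbb{CP}_{1}\times\mathbb{CP}_{1}$ via the fixed-point property of $\mathrm{Aut}(\mathbb{CP}_{1}\times\mathbb{CP}_{1})$. Everything else is bookkeeping on top of Theorem 7, Corollary 1, and the Gauss--Bonnet theorem.
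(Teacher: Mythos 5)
Your proposal is correct, and while it starts from the same reduction as the paper (nonnegative holomorphic bisectional curvature implies the orthogonal version, so Theorem 7 puts you in the $\mathbb{CP}_{2}$, locally flat, or local product case; and both arguments then use that nonnegative holomorphic \emph{sectional} curvature forces each $J$-invariant de Rham factor to have nonnegative Gauss curvature), the endgame is genuinely different. The paper identifies $(M,J)$ as a minimal rational or ruled surface via the classification of complex surfaces, bounds the base genus by $g\leq 1$ using the holomorphicity of the factor tangent planes, and then cites Howard--Smyth [9] for the fact that among $\mathbb{CP}_{1}$-bundles over $\mathbb{CP}_{1}$ only the trivial one admits such a metric. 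You instead pass to the universal cover, feed the resulting $K\geq 0$ into Corollary 1 to get the trichotomy flat / $S^{2}\times S^{2}$ / $S^{2}\times\mathbb{R}^{2}$, observe that the de Rham splitting is holomorphic (your holonomy argument that $J$ preserves the factors unless the metric is flat is correct, since a nonflat factor's $SO(2)$ holonomy acting nontrivially on one summand and trivially on the other forces any commuting parallel endomorphism to preserve the splitting), and then rule out nontrivial quotients of $\mathbb{CP}_{1}\times\mathbb{CP}_{1}$ by the fixed-point property of its automorphism group. This buys you a self-contained proof that does not lean on the very reference [9] whose theorem is being reproved, at the cost of needing the structure of $\mathrm{Aut}(\mathbb{CP}_{1}\times\mathbb{CP}_{1})$ and a slightly more careful discussion of how the deck group interacts with the product structure; your treatment of the elliptic-curve case is about as brisk as the paper's own appeal to the surface classification, and would benefit from a sentence explaining why the flat quotient $\mathbb{R}^{2}/\overline{\Gamma}$ is a manifold (rather than an orbifold) base, but this is a presentational rather than a substantive gap.
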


\begin{proof}
Suppose $(M, g)$ is locally a product space of 2-dimensional riemannian manifolds. 
Then by classification of complex surfaces, $(M, J)$ is a minimal rational or ruled surface. 
We note that nonnegative holomorphic bisectional curvature implies nonnegative holomorphic sectional curvature. 
We also note that the tangent plane on the each factor on $S^{2}$-bundle over $\Sigma_{g}$ is holomorphic. Thus, $g\leq 1$. 
Among $\mathbb{CP}_{1}$-bundle over $\mathbb{CP}_{1}$, it was shown in [9] that 
only $\mathbb{CP}_{1}\times\mathbb{CP}_{1}$ has a metric of nonnegative holomorphic bisectional curvature.

\end{proof}

\vspace{20pt}
$\mathbf{Acknowledgments}$: The author would like to thank Prof. Claude LeBrun for helpful comments
and for suggesting the article before [11].

\newpage

\end{document}